\documentclass[12pt,reqno]{article}

\usepackage[utf8]{inputenc}
\usepackage[T1]{fontenc}
\usepackage[english]{babel}
\usepackage{amssymb}
\usepackage{amsmath}
\usepackage{amsthm}
\usepackage{mathtools}
\usepackage{booktabs}
\usepackage{float}
\usepackage[hidelinks]{hyperref}
\usepackage{fullpage}

\newcommand{\doi}[1]{\href{https://doi.org/#1}{\nolinkurl{#1}}}

\newcommand{\seqnum}[1]{\href{https://oeis.org/#1}{\rm \underline{#1}}}
\newcommand{\N}{\mathbb{N}}
\newcommand{\tm}{\mathbf{t}}
\newcommand{\Z}{\mathbb{Z}}

\theoremstyle{plain}
\newtheorem{theorem}{Theorem}[section]
\newtheorem{lemma}[theorem]{Lemma}
\newtheorem{proposition}[theorem]{Proposition}
\newtheorem{corollary}[theorem]{Corollary}

\theoremstyle{definition}
\newtheorem{definition}[theorem]{Definition}
\newtheorem{example}[theorem]{Example}
\newtheorem{remark}[theorem]{Remark}

\begin{document}

\begin{center}
\vskip 1cm{\LARGE\bf The Thue-Morse transform}
\vskip .8cm
\large
Benoît Cloitre\\
\medskip
{\small\href{https://orcid.org/0009-0001-6778-153X}{ORCID: 0009-0001-6778-153X}}
\end{center}

\vskip .3in

\begin{abstract}
We define the Thue-Morse transform $\mathcal{T}$ on a class of infinite binary
words. It sends the alternating word $a_0 = 010101\dots$ to the
Thue-Morse sequence. We then study its orbit $a_m = \mathcal{T}^m(a_0)$
as well as the sequences $u_m$ and $v_m$ giving respectively the
positions of the ones and the zeros in $a_m$. We obtain an explicit
formula for $a_m$ and deduce Prouhet-Tarry-Escott identities for the
partitions induced by $u_m$ and $v_m$. We also give composition
formulas for $u_m$ and $v_m$, and a full description of the factor
complexity of $a_m$.
\end{abstract}
\bigskip

\noindent\emph{Keywords.} Thue-Morse sequence, Prouhet-Tarry-Escott
problem, factor complexity, composition laws, evil and odious numbers,
uniform morphism, automatic sequence.

\medskip
\noindent\emph{MSC 2020 Classification.} 11B85, 68R15, 11A63, 11D72, 11B83.

\bigskip

\section*{Notations and definitions}

A binary word is an infinite string over the alphabet $\{0, 1\}$,
indexed from zero. We write $w(n)$ for the letter at position $n$ of
a word $w$. The terms \emph{word} and \emph{sequence} are used
interchangeably, with \emph{word} preferred in combinatorial contexts
and \emph{sequence} in arithmetic contexts or when referring to an
OEIS entry.
For the Thue-Morse word, the positions of the $1$s and $0$s are the
classical odious and evil numbers. By analogy, for the words $a_m$
studied here, we call the positions of the $1$s and $0$s the
associated odious and evil numbers, denoted respectively by $u_m$
and $v_m$.

The set of natural numbers $\{0,1,2,\dots\}$ is denoted by $\N$.
The standard binomial coefficient is denoted by $\binom{n}{k}$.

The binary expansion of an integer $n \ge 0$ is denoted by
\[
  n = \sum_{p \ge 0} b_p(n)\, 2^p, \qquad b_p(n) \in \{0,1\}.
\]
On bits, the symbol $\oplus$ denotes the exclusive or (XOR) and
the symbol $\&$ denotes the logical and (AND).

We recall the definition of the Prouhet--Tarry--Escott problem. It asks
for two distinct finite sets \(A,B\subset \N\), with
\[
  |A|=|B|=n,
\]
such that
\[
  \sum_{x\in A} x^k=\sum_{y\in B} y^k
  \qquad \text{for } k=0,1,\ldots,d.
\]
For background, see Borwein and Ingalls~\cite{BorweinIngalls1994}.
The integer \(d\) is called the degree of the solution. The solution is
said to be ideal if \(n=d+1\). The Prouhet--Tarry--Escott pairs obtained
in Section~\ref{sec:PTE} are not ideal. For recent results on ideal
solutions, see~\cite{CMSV2024}.

\section{Introduction}

The Thue-Morse word $\tm(n)$ (\seqnum{A010060}), implicitly appearing
in Prouhet \cite{Prouhet1851} and explicitly studied by Thue
\cite{Thue1906,Thue1912}, begins with
\[
  0, 1, 1, 0, 1, 0, 0, 1, 1, 0, 0, 1, 0, 1, 1, 0,
  1, 0, 0, 1, 0, 1, 1, 0, 0, 1, 1, 0, 1, 0, 0, 1, \dots.
\]
Among its many properties, particularly those described in
\cite{AlloucheShallit1999}, the one of greatest interest for us here is the recurrence
\begin{align*}
\tm(2n) &= \tm(n) \\
\tm(2n+1) &= 1-\tm(n) \\
\tm(0) &= 0.
\end{align*}

Since the even and odd numbers partition $\N$, one may ask what the
sequence becomes if the partition is replaced by another while keeping
the same recurrence. The evil and
odious sequences $v$ and $u$, giving respectively the positions of
the $0$s and the $1$s in $\tm$, provide a first idea, defining a new
sequence $\tm'$ by
\begin{align*}
\tm'(v(n)) &= \tm'(n) \\
\tm'(u(n)) &= 1-\tm'(n) \\
\tm'(0) &= 0.
\end{align*}

The sequence obtained begins with
\[
0, 1, 0, 1, 1, 0, 1, 0, 0, 1, 0, 1, 1, 0, 1, 0,
1, 0, 1, 0, 0, 1, 0, 1, 1, 0, 1, 0, 0, 1, 0, 1, \dots,
\]
which is the OEIS entry \seqnum{A341389} \cite{OEIS}. The positions of its
$1$s and $0$s are respectively \seqnum{A158705} and \seqnum{A158704},
due to J.~Layman. In the comments to these entries,
Layman asks if they form a Prouhet-Tarry-Escott identity. We prove
that this is indeed the case in Corollary~\ref{cor:PTE-Layman}.
This question was the starting point of the present article. The
construction can in fact be iterated, yielding an infinite family of
new PTE partitions.

To formalize this iteration process, we introduce the Thue-Morse transform.

Let $w$ be a binary word starting with $01$ and containing infinitely many $0$s and infinitely many $1$s. Let $v_w(n)$ and $u_w(n)$ denote the positions of the $n$-th $0$ and the $n$-th $1$ of $w$, with $n$ running over $\N$. Thus $v_w(0)$ is the position of the first $0$ and $u_w(0)$ is the position of the first $1$. Since $w$ starts with $01$, we have $v_w(0) = 0$ and $u_w(0) = 1$.

We define a new word $w'$ as follows. First set
\[
  w'(0) = 0.
\]
Then impose, for every $n \ge 0$, the relations
\begin{align*}
  w'(v_w(n)) &= w'(n) \\
  w'(u_w(n)) &= 1 - w'(n).
\end{align*}

These relations determine $w'$ by induction. The first relation at $n = 0$ reads $w'(0) = w'(0)$ and is already satisfied. The second gives $w'(1) = 1 - w'(0) = 1$. For every $n \ge 1$, the interval $\{0, 1, \ldots, n\}$ contains at least one $0$ and at least one $1$, hence
\[
  v_w(n) > n, \qquad u_w(n) > n.
\]
The values of $w'$ at the positions $v_w(n)$ and $u_w(n)$ thus depend only on earlier values. The word $w'$ is therefore well defined.

The Thue-Morse transform is the map
\[
  \mathcal{T}(w) = w'.
\]

For every $n \ge 0$, the defining relations give
\[
  w'(v_w(n)) + w'(u_w(n)) = w'(n) + (1 - w'(n)) = 1.
\]
Each pair of positions $\{v_w(n), u_w(n)\}$ thus contains one $0$ and one $1$ in $w'$. Hence $w'$ contains infinitely many $0$'s and infinitely many $1$'s. Since $w'$ begins with $01$, the transform can be applied to $w'$ in turn, and the construction iterated. To every binary word $w$ starting with $01$ and containing infinitely many $0$'s and infinitely many $1$'s, we associate its orbit by setting
\[
  \mathcal{T}^0(w)=w,\qquad
  \mathcal{T}^m(w)=\mathcal{T}(\mathcal{T}^{m-1}(w))\quad(m\ge1).
\]
This paper studies the orbit of the alternating word $w = a_0 = 010101\dots$.

\subsection*{Main results}

Write $a_m = \mathcal{T}^m(a_0)$ for the iterate of order $m$ starting from the alternating word $a_0 = 010101\dots$, and let $u_m$ and $v_m$ denote respectively the ``odious'' and ``evil'' numbers associated with $a_m$, giving the positions of the ones and the zeros in $a_m$.

The main result of this article is the explicit formula for the iterates of
$\mathcal{T}$ starting from $a_0$. From it we deduce Prouhet-Tarry-Escott identities, composition laws for the associated evil and odious numbers, uniform morphisms generating the iterates, and exact formulas for their factor complexity.

\subsubsection*{Formula for $a_m$}

Theorem~\ref{thm:masque-binaire} gives for $a_m(n)$ an explicit formula
depending on the binary expansions of $n$ and $m-1$,
\[
  a_m(n) = \bigoplus_{p\,\&\,(m-1)=0} b_p(n),
\]
which we call the binary digit formula. We give two proofs. The first
is combinatorial and uses properties of binary words. The second is
algebraic and uses the action of $\mathcal{T}$ on generating series
in $\mathbb{F}_2[[x]]$. Both proofs end with Lucas's theorem
(Lemma~\ref{lem:lucas}).

\subsubsection*{Prouhet-Tarry-Escott identities}

For every $m \ge 1$ and every $M \ge 1$,
Lemma~\ref{lem:factorisation-PTE} gives the factorization of the
polynomial $\sum_{n=0}^{2^M-1} (-1)^{a_m(n)} x^n$, from which one
deduces that the partition of $[0, 2^M)$ between the odious numbers
$u_m$ and evil numbers $v_m$ is a solution of the
Prouhet-Tarry-Escott problem. For $m=2$, this proves the Prouhet-Tarry-Escott statement suggested
by John Layman for the sequences \seqnum{A158705} and \seqnum{A158704}.

\subsubsection*{Composition laws}

Allouche, Cloitre, and Shevelev \cite{ACS2016} record the identities
\begin{align*}
  u(u(n)) &= 2u(n), & v(v(n)) &= 2v(n), \\
  u(v(n)) &= 2v(n)+1, & v(u(n)) &= 2u(n)+1
\end{align*}
for the odious numbers $u$ and evil numbers $v$ of the Thue-Morse
sequence. We extend these relations to the iterates $a_m$ for
$m \ge 1$ (Theorem~\ref{thm:compositions-evil-odious}). The four
composed sequences $x \circ y$ with $x, y \in \{u_m, v_m\}$ admit
an expression of the same type, to which is added an explicit
correction function $c_m$, which is $2$-automatic.

\subsubsection*{Factor complexity}

Theorem~\ref{thm:complexity-recurrence} gives a recurrence for the factor
complexity of every iterate. The recurrence has the same form for all
$m$, with block length
\[
  B=2^{2^{\lceil \log_2 m\rceil}}.
\]
Together with the finite seed $p_m(1),\ldots,p_m(2B-1)$, it determines
the whole function $p_m$. For the primary levels $m=2^k$,
Corollary~\ref{cor:primary-closed-complexity} gives a closed form.

\subsection*{Related work}

The construction of Prouhet-Tarry-Escott partitions from
Thue-Morse-type words goes back to Prouhet
\cite{Prouhet1851}. \v{C}ern\'y \cite{Cerny2013} studied this point
of view in terms of positions of letters in words generated by
uniform morphisms, and extended the approach to multi-dimensional
PTE solutions through composition of balanced morphisms
\cite{Cerny2017}. Bolker, Offner, Richman, and Zara
\cite{BolkerOffnerRichmanZara2016} gave other families of PTE
partitions related to generalized Thue-Morse sequences.

The present article starts from a different object. We study the
transform $\mathcal{T}$ defined by the positions of the $0$s and
the $1$s of a binary word. The PTE partitions obtained here are a
consequence of the binary digit formula for the iterates $a_m$. The
morphic structure of these iterates then places them back in the
setting of constructions by uniform morphisms, while this work
also studies the composition laws of their associated evil and
odious numbers and their factor complexity.

\subsection*{Outline}

We prove the binary digit formula in Section~\ref{sec:orbit}.
Section~\ref{sec:PTE} is devoted to the Prouhet-Tarry-Escott
problem. The composition laws for the evil and odious numbers of
the iterates of the Thue-Morse transform are the subject of
Section~\ref{sec:composition}. Section~\ref{sec:morphismes}
identifies each iterate $a_m$ as a fixed point of a uniform morphism. We use this in
Section~\ref{sec:complexity} to study the factor complexity. Some perspectives are mentioned at
the end of the article, notably a $q$-ary version of the transform
and Woods-Robbins type products attached to the iterates $a_{2^k}$.

\section{Thue-Morse orbit}
\label{sec:orbit}

We define the initial sequence by
\[
  a_0(n) = n \pmod 2,
\]
then we construct the orbit of this word by setting for every natural number $m$
\[
  a_{m+1} = \mathcal{T}(a_m).
\]
With this definition, the word $a_1$ is the Thue-Morse sequence \seqnum{A010060} and $a_2$ is the sequence \seqnum{A341389}. The following iterates $a_3$ and $a_4$ are respectively \seqnum{A395958} and \seqnum{A395961}. The first terms of the first five iterates are given in Table~\ref{tab:orbite}.

\begin{table}[H]
\centering
\small
\setlength{\tabcolsep}{3pt}
\begin{tabular}{c|cccccccccccccccccccccccccccc}
 & 0 & 1 & 2 & 3 & 4 & 5 & 6 & 7 & 8 & 9 & 10 & 11 & 12 & 13 & 14 & 15 & 16 & 17 & 18 & 19 & 20 & 21 & 22 & 23 & 24 & 25 & 26 & 27 \\
\hline
$a_{0}$ & 0 & 1 & 0 & 1 & 0 & 1 & 0 & 1 & 0 & 1 & 0 & 1 & 0 & 1 & 0 & 1 & 0 & 1 & 0 & 1 & 0 & 1 & 0 & 1 & 0 & 1 & 0 & 1 \\
$a_{1}$ & 0 & 1 & 1 & 0 & 1 & 0 & 0 & 1 & 1 & 0 & 0 & 1 & 0 & 1 & 1 & 0 & 1 & 0 & 0 & 1 & 0 & 1 & 1 & 0 & 0 & 1 & 1 & 0 \\
$a_{2}$ & 0 & 1 & 0 & 1 & 1 & 0 & 1 & 0 & 0 & 1 & 0 & 1 & 1 & 0 & 1 & 0 & 1 & 0 & 1 & 0 & 0 & 1 & 0 & 1 & 1 & 0 & 1 & 0 \\
$a_{3}$ & 0 & 1 & 1 & 0 & 0 & 1 & 1 & 0 & 0 & 1 & 1 & 0 & 0 & 1 & 1 & 0 & 1 & 0 & 0 & 1 & 1 & 0 & 0 & 1 & 1 & 0 & 0 & 1 \\
$a_{4}$ & 0 & 1 & 0 & 1 & 0 & 1 & 0 & 1 & 0 & 1 & 0 & 1 & 0 & 1 & 0 & 1 & 1 & 0 & 1 & 0 & 1 & 0 & 1 & 0 & 1 & 0 & 1 & 0 \\
\end{tabular}
\caption{The first $28$ terms of the iterates $a_0, a_1, \ldots, a_4$ of the Thue-Morse transform.}
\label{tab:orbite}
\end{table}

The columns of Table~\ref{tab:orbite} display a vertical periodicity.
For each fixed $n \ge 2$, the sequence $(a_m(n))_{m \ge 0}$ is
periodic in $m$, of period dividing $2^K$ with
$K = \lceil \log_2(\lfloor \log_2 n \rfloor + 1) \rceil$, while it is
constant for $n \in \{0,1\}$. This is a direct consequence of the binary digit formula of Theorem~\ref{thm:masque-binaire} below.

The proof uses Lucas's theorem \cite[p.~52]{Lucas1878},\cite{Granville1997}.

\begin{lemma}
\label{lem:lucas}
Let $r$ be a prime number and let $0 \le b \le a$ be integers with base-$r$ expansions $a = \sum_i a_i r^i$ and $b = \sum_i b_i r^i$. Then
\[
  \binom{a}{b} \equiv \prod_i \binom{a_i}{b_i} \pmod r,
\]
with the convention $\binom{a_i}{b_i} = 0$ when $b_i > a_i$. In particular, for $r = 2$ and $0 \le b \le a$, the binomial coefficient $\binom{a}{b}$ is odd if and only if the addition $b + (a-b)$ has no carry, equivalently $b \,\&\, (a-b) = 0$.
\end{lemma}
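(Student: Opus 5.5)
The plan is the classical generating-function proof of Lucas's theorem, followed by a short carry argument for the binary case. Throughout we work in the polynomial ring $\mathbb{F}_r[x]$.

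\emph{Step 1 (Frobenius).} Since $r$ divides $\binom{r}{j}$ for $0<j<r$, we have $(1+x)^r \equiv 1+x^r \pmod r$. Iterating gives
\[
  (1+x)^{r^i} \equiv 1+x^{r^i} \pmod r
\]
for every $i\ge 0$.

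\emph{Step 2 (factorisation).} Write $a=\sum_i a_i r^i$. Then
\[
  (1+x)^a=\prod_i \bigl((1+x)^{r^i}\bigr)^{a_i} \equiv \prod_i (1+x^{r^i})^{a_i}=\prod_i \sum_{c_i=0}^{a_i}\binom{a_i}{c_i}x^{c_i r^i} \pmod r .
\]

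\emph{Step 3 (comparing coefficients).} Expand the product on the right. Each choice of digits $(c_i)$ with $0\le c_i\le a_i<r$ produces the exponent $\sum_i c_i r^i$. By uniqueness of base-$r$ expansion, distinct choices give distinct exponents, so no two terms merge. The coefficient of $x^b$ on the right is therefore $\prod_i\binom{a_i}{b_i}$ if $b_i\le a_i$ for all $i$, and $0$ otherwise. This matches the convention $\binom{a_i}{b_i}=0$ for $b_i>a_i$. On the left the coefficient of $x^b$ is $\binom{a}{b}$, which proves the congruence. This uniqueness point is the only place needing care; everything else is routine.

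\emph{Step 4 (the case $r=2$).} Here every factor $\binom{a_i}{b_i}$ is $0$ or $1$, and it vanishes exactly when $b_i=1$ and $a_i=0$. So $\binom{a}{b}$ is odd if and only if the binary digits of $b$ form a subset of those of $a$, that is $b\,\&\,a=b$.

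It remains to translate this into the carry condition. Set $c=a-b\ge 0$.

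\emph{Direction 1.} Suppose $b\,\&\,c=0$. Then the addition $b+c$ has no carry, so $b+c$ equals $b\oplus c$ digitwise. Hence the bits of $a$ are the union of the disjoint bit sets of $b$ and $c$, and in particular $b$'s bits lie inside $a$'s.

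\emph{Direction 2.} Suppose $b$'s bits lie inside $a$'s. Then $c=a-b$ is obtained by clearing those bits, so $c=a\oplus b$. This number is disjoint from $b$, giving $b\,\&\,c=0$.

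Finally, $b\,\&\,c=0$ is by definition the statement that the addition $b+c$ produces no carry at any position. This gives the stated equivalence.
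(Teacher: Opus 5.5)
Your proof is correct. The paper does not prove this lemma at all: it quotes Lucas's theorem as a classical result, citing Lucas and Granville's survey, so there is no proof in the paper to compare against. Your argument supplies the omitted proof, and it is the standard one. It uses the Frobenius identity $(1+x)^{r}\equiv 1+x^{r}$, factors $(1+x)^a$ along the base-$r$ digits of $a$, and compares coefficients using uniqueness of base-$r$ expansions, for which $c_i\le a_i<r$ is exactly what is needed. Your reduction of the case $r=2$ to the digit-inclusion condition $b\,\&\,a=b$, and then to $b\,\&\,(a-b)=0$ in both directions, is also sound.

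One minor point: the equivalence between $b\,\&\,c=0$ and the absence of carries in $b+c$ is not literally a definition. It is a one-line induction on the bit position: with no incoming carry and the two bits not both equal to $1$, there is no outgoing carry. This does not affect correctness.

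An alternative for the binary part is Kummer's theorem. It states that $\nu_2\binom{a}{b}$ equals the number of carries in the addition $b+(a-b)$, which gives the carry formulation directly and with more information. Your route through digit inclusion is more elementary and is all the paper needs, since it only uses the parity of $\binom{p+m-1}{m-1}$.
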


The following theorem gives the binary digit formula for the iterate $a_m$.

\begin{theorem}
\label{thm:masque-binaire}
For every $m \ge 1$ and every $n \ge 0$,
\[
  a_m(n) = \bigoplus_{\substack{p\ge0\\p\,\&\,(m-1)=0}} b_p(n).
\]
The XOR is finite for fixed $n$, since $b_p(n) = 0$ for every $p$ with $2^p > n$.
\end{theorem}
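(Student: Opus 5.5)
The plan is to prove a stronger statement about a whole class of ``linear'' binary words, and then induct on $m$. For a set $S\subseteq\N$ with $0\in S$, put
\[
  w_S(n)=\bigoplus_{p\in S} b_p(n).
\]
Write $\chi_S$ for the indicator function of $S$. I will show that $\mathcal{T}(w_S)=w_{S^+}$, where
\[
  \chi_{S^+}(r)\equiv\sum_{q\le r}\chi_S(q)\pmod 2 .
\]
In words, $\mathcal{T}$ acts on the set of active bit positions as a prefix sum mod $2$ of its indicator. Note that $0\in S^+$, and that $w_S$ starts with $01$ and has infinitely many $0$s and $1$s, so the transform applies.

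\textbf{Step 1: positions of $0$s and $1$s.} Since $0\in S$, we have $w_S(2k+1)=1\oplus w_S(2k)$. So each pair $\{2k,2k+1\}$ contains exactly one $0$ and one $1$. Hence $v_{w_S}(k)$ and $u_{w_S}(k)$ are exactly $2k$ and $2k+1$, in an order fixed by $c(k):=w_S(2k)$:
\[
  v_{w_S}(k)=2k+c(k),\qquad u_{w_S}(k)=2k+1-c(k).
\]
Moreover $c(k)=\bigoplus_{q\in S,\,q\ge1} b_{q-1}(k)$.

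\textbf{Step 2: a recursion for $w'=\mathcal{T}(w_S)$.} The defining relations combine into
\[
  w'(2k+j)=w'(k)\oplus j\oplus c(k),\qquad j\in\{0,1\},
\]
that is, $w'(n)=b_0(n)\oplus c(\lfloor n/2\rfloor)\oplus w'(\lfloor n/2\rfloor)$. I unroll this down to $w'(0)=0$, using $c(\lfloor n/2^{i+1}\rfloor)=\bigoplus_{q\in S,\,q\ge1} b_{q+i}(n)$. This gives
\[
  w'(n)=\bigoplus_{i\ge0}\Bigl(b_i(n)\oplus\bigoplus_{q\in S,\,q\ge1}b_{q+i}(n)\Bigr).
\]
All sums are finite for fixed $n$. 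The coefficient of $b_r(n)$ is $1+\#\{q\in S:1\le q\le r\}$. Because $0\in S$, this equals $\#\{q\in S: q\le r\}$ mod $2$, which is exactly $\chi_{S^+}(r)$.

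\textbf{Step 3: iterate and apply Lucas.} We have $a_0(n)=b_0(n)=w_{\{0\}}(n)$, so $\chi_{S_0}=\delta_0$. Taking $m$ prefix sums of $\delta_0$ gives $\chi_{S_m}(r)\equiv\binom{r+m-1}{r}\pmod 2$; this is a standard hockey-stick induction. By Lemma~\ref{lem:lucas} with $a=r+m-1$ and $b=r$, this coefficient is odd exactly when $r\,\&\,(m-1)=0$. This yields the formula of Theorem~\ref{thm:masque-binaire}. As a check, $m=1$ gives all bits, i.e.\ Thue-Morse.

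The main obstacle is Step 2: getting the unrolled recursion right, and in particular seeing that the $+1$ contributed by the $j$-terms cancels against the $q=0$ term of $S$. The hypothesis $0\in S$, preserved along the orbit, is essential both there and in Step 1.
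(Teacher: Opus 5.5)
Your proof is correct and is essentially the paper's second (algebraic) proof. Your identity $\mathcal{T}(w_S)=w_{S^+}$, with $\chi_{S^+}$ the mod-$2$ prefix sum of $\chi_S$, is exactly Lemma~\ref{lem:action-algebrique} with $\chi_S$ in the role of the coefficient sequence $c$ (your recursion is the paper's $w'(n)=w(n)\oplus w'(\lfloor n/2\rfloor)$, since $b_0(n)\oplus w_S(2\lfloor n/2\rfloor)=w_S(n)$), and your hockey-stick iteration simply replaces the expansion of $(1-x)^{-m}$ before the same appeal to Lucas's theorem.
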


The proofs use the following invariant.

\begin{lemma}
\label{lem:pair-balance}
Let $w$ be a binary word in the domain of $\mathcal{T}$. If each pair $\{2k, 2k+1\}$ contains one $0$ and one $1$ in $w$, then the same property holds for $\mathcal{T}(w)$.
\end{lemma}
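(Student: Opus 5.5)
The plan is to show that, under the hypothesis, the $k$-th zero and the $k$-th one of $w$ are exactly the two positions $2k$ and $2k+1$ (in some order). The lemma then follows from the identity $w'(v_w(n)) + w'(u_w(n)) = 1$ already observed in the introduction.

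First I will count letters in prefixes. Since each pair $\{2j,2j+1\}$ contains one $0$ and one $1$ in $w$, the prefix $w(0)\cdots w(2k-1)$ is a union of the $k$ pairs with $j<k$. It therefore contains exactly $k$ zeros and exactly $k$ ones. The zeros of $w$ at positions $<2k$ are thus those numbered $0,\dots,k-1$, and likewise for the ones.

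Next I will identify the positions. The pair $\{2k,2k+1\}$ contains exactly one $0$ and one $1$, and these are the first letters of each kind beyond the prefix. The $0$ in this pair is therefore the zero with index $k$, and the $1$ is the one with index $k$. In other words,
\[
  \{v_w(k),\, u_w(k)\} = \{2k,\, 2k+1\}\qquad\text{for every } k\ge 0 .
\]
As a consistency check, for $k=0$ this gives $v_w(0)=0$ and $u_w(0)=1$, matching the fact that $w$ starts with $01$.

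Finally, the defining relations of $\mathcal{T}$ give $w'(v_w(k)) + w'(u_w(k)) = w'(k) + (1-w'(k)) = 1$. Hence $w'(2k)+w'(2k+1)=1$, so the pair $\{2k,2k+1\}$ contains one $0$ and one $1$ in $\mathcal{T}(w)=w'$. Since $w'$ is again in the domain of $\mathcal{T}$, as noted in the introduction, the property is preserved.

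There is no real obstacle here. The only point needing care is the indexing convention: $v_w(k)$ is the position of the zero numbered $k$, counting from $0$. With that convention the prefix count above gives the identification of positions exactly, without an off-by-one error.
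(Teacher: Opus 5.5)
Your proposal is correct and follows essentially the same route as the paper: the prefix count gives $\{v_w(k), u_w(k)\} = \{2k, 2k+1\}$, and the defining relations of $\mathcal{T}$ then force $w'(2k)$ and $w'(2k+1)$ to be complementary. Your closing remark that $w'$ is again in the domain of $\mathcal{T}$ is true but not needed for the lemma as stated.
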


\begin{proof}
Assume that in $w$ each of the first $k$ pairs $\{0, 1\}, \{2, 3\}, \dots, \{2k-2, 2k-1\}$ contains one $0$ and one $1$. Then these $2k$ positions carry exactly the zeros and ones of indices $0, \dots, k-1$ of $w$. The zero and the one of index $k$ are therefore the next ones, located in the pair $\{2k, 2k+1\}$, that is,
\[
  \{v_w(k), u_w(k)\} = \{2k, 2k+1\}.
\]
The defining relations of $\mathcal{T}$ give $w'(v_w(k)) = w'(k)$ and $w'(u_w(k)) = 1 - w'(k)$, so the values of $w'$ at the positions $2k$ and $2k+1$ are complementary. The pair $\{2k, 2k+1\}$ therefore contains one $0$ and one $1$ in $w'$.
\end{proof}

The starting word $a_0 = 010101\dots$ satisfies this pair-balance property, so by Lemma~\ref{lem:pair-balance}, every iterate $a_m = \mathcal{T}^m(a_0)$ does too.

We give two proofs. The first is combinatorial. The second is algebraic. Both end with Lucas's theorem.

\subsection{Combinatorial proof}

\begin{proof}
Set $\sigma_S(n) = \bigoplus_{p \in S} b_p(n)$ for $S \subset \N$ finite and $n \ge 0$. By Lemma~\ref{lem:pair-balance} applied iteratively from $a_0$, each iterate $a_m$ has the property that the pair $\{2k, 2k+1\}$ contains one zero and one one, so $\{v_m(k), u_m(k)\} = \{2k, 2k+1\}$, where $v_m(k)$ and $u_m(k)$ denote the positions of the $k$-th zero and the $k$-th one of $a_m$. The definition of $\mathcal{T}$ gives
\[
  a_{m+1}(v_m(k)) = a_{m+1}(k), \qquad a_{m+1}(u_m(k)) = 1 - a_{m+1}(k).
\]

This property implies $v_m(k) = 2k + a_m(2k)$ and $u_m(k) = 2k + 1 - a_m(2k)$. If $a_m(2k) = 0$, then $v_m(k) = 2k$ and $u_m(k) = 2k+1$, so
\[
  a_{m+1}(2k) = a_{m+1}(k), \qquad a_{m+1}(2k+1) = 1 - a_{m+1}(k).
\]
If $a_m(2k) = 1$, the two positions are exchanged and one obtains
\[
  a_{m+1}(2k) = 1 - a_{m+1}(k), \qquad a_{m+1}(2k+1) = a_{m+1}(k).
\]
In both cases
\[
  a_{m+1}(2k) = a_{m+1}(k) \oplus a_m(2k), \qquad a_{m+1}(2k+1) = a_{m+1}(k) \oplus a_m(2k+1).
\]
These two equalities can be summarized as $a_{m+1}(n) = a_{m+1}(\lfloor n/2 \rfloor) \oplus a_m(n)$ for every $n \ge 1$, with $a_{m+1}(0) = 0$. Iterated until reaching the argument zero, this relation becomes
\[
  a_{m+1}(n) = \bigoplus_{j \ge 0} a_m(\lfloor n/2^j \rfloor),
\]
where the sum is finite since $\lfloor n/2^j \rfloor = 0$ for $j$ large enough, and $a_m(0) = 0$ for every $m$.

We use the identity
\[
  b_r(\lfloor n/2^j \rfloor) = b_{r+j}(n).
\]
For $m = 1$, starting from $a_0(n) = b_0(n)$, the above relation gives
\[
  a_1(n) = \bigoplus_{j \ge 0} b_j(n).
\]
If the formula holds at rank $m$, then
\[
a_{m+1}(n) = \bigoplus_{j_0 \ge 0} \bigoplus_{j_1, \dots, j_m \ge 0} b_{j_0 + j_1 + \cdots + j_m}(n),
\]
and one obtains by induction
\[
a_m(n) = \bigoplus_{j_1, \dots, j_m \ge 0} b_{j_1 + \cdots + j_m}(n).
\]
All these sums are finite for $n$ fixed, since $b_p(n) = 0$ as soon as $2^p > n$. The XOR of bits coincides with their integer sum modulo $2$, and for a fixed integer $p$, the number of $m$-tuples $(j_1, \dots, j_m)$ of natural numbers with $j_1 + \cdots + j_m = p$ equals $\binom{p+m-1}{m-1}$ (weak compositions of $p$ into $m$ parts). Grouping the tuples according to this sum, we obtain
\begin{equation}
a_m(n) = \sum_{p \ge 0} \binom{p+m-1}{m-1} b_p(n) \pmod 2.
\label{eq:formule-binomiale-binaire}
\end{equation}
The term indexed by $p$ contributes $b_p(n)$ modulo $2$ when $\binom{p+m-1}{m-1}$ is odd, and $0$ otherwise. Lemma~\ref{lem:lucas} shows that this parity equals $1$ if and only if the binary addition $p + (m-1)$ produces no carry, which is equivalent to $p\,\&\,(m-1) = 0$. The sum \eqref{eq:formule-binomiale-binaire} then reduces to
\[
  \bigoplus_{\substack{p \ge 0 \\ p\,\&\,(m-1) = 0}} b_p(n),
\]
which establishes the stated formula.
\end{proof}


\subsection{Algebraic proof}

The second proof of Theorem~\ref{thm:masque-binaire} uses the action of $\mathcal{T}$ on generating series in $\mathbb{F}_2[[x]]$. The binomial coefficient $\binom{p+m-1}{m-1}$ arises as the coefficient of $x^p$ in the formal expansion of $(1-x)^{-m}$ over $\mathbb{F}_2$.

For every $c \in \mathbb{F}_2^{\N}$ with $c(0) = 1$, set
\[
\sigma_c(n) = \bigoplus_{p \ge 0} c(p)\, b_p(n) \qquad (n \ge 0).
\]
For each $n$, only finitely many terms are nonzero since $b_p(n) = 0$ as soon as $2^p > n$. Let
\[
\mathcal{S} = \{\, \sigma_c : c \in \mathbb{F}_2^{\N},\ c(0) = 1 \,\}.
\]
We have $\sigma_c(0) = 0$ and $\sigma_c(2k) + \sigma_c(2k+1) = 1$ for every $k \ge 0$, so each word in $\mathcal{S}$ lies in the domain of $\mathcal{T}$. We associate to $c$ the generating series $\widehat{c}(x) = \sum_{p \ge 0} c(p)\, x^p \in \mathbb{F}_2[[x]]$. The starting word $a_0$ of the orbit lies in $\mathcal{S}$, since $a_0(n) = b_0(n) = \sigma_{\kappa_0}(n)$ with $\kappa_0(0) = 1$ and $\kappa_0(p) = 0$ for $p \ge 1$, so that $\widehat{\kappa_0}(x) = 1$.

\begin{lemma}
\label{lem:action-algebrique}
The Thue-Morse transform $\mathcal{T}$ stabilizes $\mathcal{S}$, and for every $c$ with $c(0) = 1$,
\[
  \mathcal{T}(\sigma_c) = \sigma_{c'} \qquad \text{with} \qquad c'(r) = \bigoplus_{p=0}^{r} c(p).
\]
Equivalently, on generating series in $\mathbb{F}_2[[x]]$,
\[
  \widehat{c'}(x) = (1-x)^{-1}\, \widehat{c}(x).
\]
The map $c \mapsto c'$ is a bijection on $\{c \in \mathbb{F}_2^{\N} : c(0) = 1\}$, with inverse the finite-difference operator $c''(0) = c'(0)$, $c''(r) = c'(r) \oplus c'(r-1)$ for $r \ge 1$. The transform $\mathcal{T}$ therefore acts as a bijection on $\mathcal{S}$, with inverse the difference operator on the coefficient sequence.
\end{lemma}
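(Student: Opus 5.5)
We prove Lemma~\ref{lem:action-algebrique} by reusing the key recurrence from the combinatorial proof, valid for any pair-balanced word. Let $w=\sigma_c$ with $c(0)=1$. As noted before the lemma, $\sigma_c(2k)\oplus\sigma_c(2k+1)=b_0(2k)\oplus b_0(2k+1)=1$, since $c(0)=1$ and the two integers differ only in bit $0$. So every pair $\{2k,2k+1\}$ contains one $0$ and one $1$. The argument of Lemma~\ref{lem:pair-balance} then gives $\{v_w(k),u_w(k)\}=\{2k,2k+1\}$. Writing $w'=\mathcal{T}(w)$, the same case analysis as in the combinatorial proof yields
\[
  w'(n)=w'(\lfloor n/2\rfloor)\oplus w(n)\quad(n\ge1),\qquad w'(0)=0 .
\]
Iterating this relation down to argument $0$, and using $w(0)=\sigma_c(0)=0$, we get $w'(n)=\bigoplus_{j\ge0} w(\lfloor n/2^j\rfloor)$.

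Next we substitute $w=\sigma_c$ and use $b_p(\lfloor n/2^j\rfloor)=b_{p+j}(n)$. This gives
\[
  w'(n)=\bigoplus_{j\ge0}\bigoplus_{p\ge0}c(p)\,b_{p+j}(n)=\bigoplus_{r\ge0}\Bigl(\bigoplus_{p=0}^{r}c(p)\Bigr)b_r(n),
\]
after regrouping by $r=p+j$. All sums are finite for fixed $n$, so the regrouping is legitimate. Hence $w'=\sigma_{c'}$ with $c'(r)=\bigoplus_{p\le r}c(p)$, and $c'(0)=c(0)=1$. This shows that $\mathcal{T}$ stabilizes $\mathcal{S}$ and gives the stated formula for $c'$. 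On generating series, $\widehat{c'}(x)=\widehat{c}(x)\sum_{j\ge0}x^j=(1-x)^{-1}\widehat{c}(x)$ in $\mathbb{F}_2[[x]]$.

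For the bijection claim, note that multiplication by $(1-x)^{-1}$ is invertible on $\mathbb{F}_2[[x]]$, with inverse multiplication by $1-x=1+x$. On coefficients this inverse is exactly the difference operator $c''(0)=c'(0)$, $c''(r)=c'(r)\oplus c'(r-1)$. Both maps preserve the constant coefficient, so they restrict to mutually inverse bijections on $\{c: c(0)=1\}$.

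To transfer this bijection to $\mathcal{S}$, we need $c\mapsto\sigma_c$ to be injective. This holds because $\sigma_c(2^p)=c(p)$, so $c$ is recovered from $\sigma_c$. Therefore $\mathcal{T}$ restricted to $\mathcal{S}$ is conjugate to $c\mapsto c'$, and is a bijection with the stated inverse.

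The only delicate step is the first one: checking that the recurrence $w'(n)=w'(\lfloor n/2\rfloor)\oplus w(n)$ holds for an arbitrary pair-balanced $w$, not only for the iterates $a_m$. The case analysis in the combinatorial proof used only pair balance and the relation $v_w(k)=2k+w(2k)$, so it carries over verbatim.
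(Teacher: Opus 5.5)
Your proof is correct and follows the paper's argument step by step. It uses pair balance of $\sigma_c$, the recurrence $w'(n)=w'(\lfloor n/2\rfloor)\oplus w(n)$ unrolled to $\bigoplus_j w(\lfloor n/2^j\rfloor)$, regrouping by $r=p+j$, and inversion as multiplication by $1+x$. Your added remark that $\sigma_c(2^p)=c(p)$, so $c\mapsto\sigma_c$ is injective, fills a small point the paper leaves implicit when it transfers the bijection from coefficient sequences to $\mathcal{S}$.
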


\begin{proof}
Let $a = \sigma_c \in \mathcal{S}$ and set $b = \mathcal{T}(a)$. Since $c(0) = 1$, the values $a(2k) = \sigma_c(2k)$ and $a(2k+1) = \sigma_c(2k+1)$ differ exactly by the bit $b_0$, so each pair $\{2k, 2k+1\}$ contains one $0$ and one $1$. Hence $\{v_a(k), u_a(k)\} = \{2k, 2k+1\}$, and the defining relations of $\mathcal{T}$ yield the recurrence
\[
  b(n) = b(\lfloor n/2 \rfloor) \oplus a(n) \qquad (n \ge 1),
\]
together with $b(0) = 0$. Iterating until the argument reaches zero, we obtain
\[
  b(n) = \bigoplus_{j \ge 0} a(\lfloor n/2^j \rfloor).
\]
Substituting $a = \sigma_c$ and using $b_p(\lfloor n/2^j \rfloor) = b_{p+j}(n)$, we exchange the order of summation,
\[
  b(n) = \bigoplus_{j \ge 0} \bigoplus_{p \ge 0} c(p)\, b_{p+j}(n) = \bigoplus_{r \ge 0} b_r(n) \bigoplus_{p = 0}^{r} c(p).
\]
We have $b = \sigma_{c'}$ with $c'(r) = \bigoplus_{p=0}^{r} c(p)$, and $c'(0) = c(0) = 1$, so $b \in \mathcal{S}$.

The translation of $c'(r) = \bigoplus_{p=0}^{r} c(p)$ into generating series is $\widehat{c'}(x) = \widehat{c}(x) / (1-x)$ in $\mathbb{F}_2[[x]]$. We thus obtain that the map $c \mapsto c'$ is multiplication by $(1-x)^{-1}$, with inverse multiplication by $(1-x)$, which on coefficients reads $c''(0) = c'(0)$ and $c''(r) = c'(r) \oplus c'(r-1)$ for $r \ge 1$. Both maps preserve the condition $c(0) = 1$, so $\mathcal{T}$ is a bijection on $\mathcal{S}$.
\end{proof}

\begin{remark}
On the coefficient sequence, the action of $\mathcal{T}$ corresponds to the infinite lower-triangular matrix $P$ over $\mathbb{F}_2$ with $P_{r,p} = 1$ for $0 \le p \le r$ and $0$ otherwise,
\[
P =
\begin{pmatrix}
1 & 0 & 0 & 0 & 0 & \cdots \\
1 & 1 & 0 & 0 & 0 & \cdots \\
1 & 1 & 1 & 0 & 0 & \cdots \\
1 & 1 & 1 & 1 & 0 & \cdots \\
1 & 1 & 1 & 1 & 1 & \cdots \\
\vdots & \vdots & \vdots & \vdots & \vdots & \ddots
\end{pmatrix}.
\]
Its inverse $P^{-1}$ has entries $1$ on the diagonal and on the subdiagonal, and $0$ elsewhere,
\[
P^{-1} =
\begin{pmatrix}
1 & 0 & 0 & 0 & 0 & \cdots \\
1 & 1 & 0 & 0 & 0 & \cdots \\
0 & 1 & 1 & 0 & 0 & \cdots \\
0 & 0 & 1 & 1 & 0 & \cdots \\
0 & 0 & 0 & 1 & 1 & \cdots \\
\vdots & \vdots & \vdots & \vdots & \vdots & \ddots
\end{pmatrix}.
\]
The $m$-th power $P^m$ has entries $(P^m)_{r,p} = \binom{r-p+m-1}{m-1} \pmod 2$ for $0 \le p \le r$ and $0$ otherwise. The matrix $P^m$ is therefore Toeplitz, with the sequence $\binom{p+m-1}{m-1} \pmod 2$ as its first column. Since $\kappa_m = P^m \kappa_0$ with $\kappa_0 = (1, 0, 0, \dots)^T$, this first column is exactly the coefficient sequence $\kappa_m$ appearing in the binary digit formula. For instance, $m = 3$ gives
\[
P^3 =
\begin{pmatrix}
1 & 0 & 0 & 0 & 0 & 0 & 0 & \cdots \\
1 & 1 & 0 & 0 & 0 & 0 & 0 & \cdots \\
0 & 1 & 1 & 0 & 0 & 0 & 0 & \cdots \\
0 & 0 & 1 & 1 & 0 & 0 & 0 & \cdots \\
1 & 0 & 0 & 1 & 1 & 0 & 0 & \cdots \\
1 & 1 & 0 & 0 & 1 & 1 & 0 & \cdots \\
0 & 1 & 1 & 0 & 0 & 1 & 1 & \cdots \\
\vdots & \vdots & \vdots & \vdots & \vdots & \vdots & \vdots & \ddots
\end{pmatrix},
\]
whose first column $(1, 1, 0, 0, 1, 1, 0, \dots)$ matches $\kappa_3(p) = \binom{p+2}{2} \pmod 2$.
\end{remark}

The second proof of the binary digit formula follows by iterated application of Lemma~\ref{lem:action-algebrique}.

\begin{proof}[Second proof of Theorem~\ref{thm:masque-binaire}]
Since $a_0 = \sigma_{\kappa_0}$ with $\widehat{\kappa_0}(x) = 1$, Lemma~\ref{lem:action-algebrique} applied $m$ times gives $a_m = \sigma_{\kappa_m}$ with
\[
  \widehat{\kappa_m}(x) = (1-x)^{-m} \qquad \text{in } \mathbb{F}_2[[x]].
\]
The formal expansion of $(1-x)^{-m}$ yields
\[
  \kappa_m(p) = \binom{p+m-1}{m-1} \pmod 2 \qquad (p \ge 0),
\]
hence
\[
  a_m(n) = \sigma_{\kappa_m}(n) = \bigoplus_{p \ge 0} \binom{p+m-1}{m-1} b_p(n) \pmod 2.
\]
By Lemma~\ref{lem:lucas}, $\binom{p+m-1}{m-1}$ is odd if and only if the binary addition $p + (m-1)$ produces no carry, that is, $p \,\&\, (m-1) = 0$. The sum above therefore reduces to
\[
  a_m(n) = \bigoplus_{\substack{p \ge 0 \\ p\,\&\,(m-1) = 0}} b_p(n),
\]
which is the formula stated in Theorem~\ref{thm:masque-binaire}.
\end{proof}

\section{Prouhet-Tarry-Escott identities}
\label{sec:PTE}

The binary digit formula transforms each iterate $a_m$ into a partition defined by a parity condition on the binary digits of $n$, from which one derives PTE identities. We use the standard fact that the order of vanishing at $x=1$ of the signed generating polynomial of a partition controls its PTE degree.

\begin{lemma}
\label{lem:factorisation-PTE}
For every integer $M \ge 1$ and every subset $S \subset \{0, 1, \dots, M-1\}$, setting $f_S(n) = \bigoplus_{p \in S} b_p(n)$, one has the identity in $\Z[x]$
\[
  \sum_{n=0}^{2^M-1} (-1)^{f_S(n)} x^n = \prod_{p \in S} (1 - x^{2^p}) \prod_{\substack{0 \le p < M \\ p \notin S}} (1 + x^{2^p}).
\]
The order of vanishing of the left-hand side at $x = 1$ equals $|S|$. If $S \neq \emptyset$, this provides a solution to the Prouhet-Tarry-Escott problem of degree $|S| - 1$.
\end{lemma}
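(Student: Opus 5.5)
The plan is to prove the polynomial identity by expanding the product over all $M$ bit positions, then read off the order of vanishing and translate it into power-sum equalities.

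\textbf{The identity.} Every $n \in [0, 2^M)$ has a unique binary expansion $n = \sum_{p<M} b_p(n) 2^p$. Hence the product
\[
  \prod_{0 \le p < M} \bigl(1 + \epsilon_p x^{2^p}\bigr), \qquad \epsilon_p = -1 \text{ if } p \in S, \ \epsilon_p = +1 \text{ otherwise},
\]
expands, by choosing in each factor either $1$ (bit $0$) or $\epsilon_p x^{2^p}$ (bit $1$), to $\sum_{n<2^M} \prod_{p:\, b_p(n)=1} \epsilon_p \, x^n$. The coefficient of $x^n$ is $(-1)^{\#\{p \in S : b_p(n)=1\}} = (-1)^{f_S(n)}$, since XOR of bits equals their sum mod $2$. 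This gives the identity.

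\textbf{Order of vanishing.} Each factor $1 - x^{2^p} = (1-x)(1+x+\cdots+x^{2^p-1})$ has a simple zero at $x=1$, since the cofactor equals $2^p \neq 0$ there. Each factor $1 + x^{2^p}$ takes the value $2$ at $x=1$. So the product is $(1-x)^{|S|}\,g(x)$ with $g(1) = \prod_{p \in S} 2^p \cdot 2^{M-|S|} \neq 0$, and the order of vanishing is exactly $|S|$.

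\textbf{PTE consequence.} Set $P(x) = \sum_n (-1)^{f_S(n)} x^n$. The order of vanishing $|S|$ means $P^{(j)}(1) = 0$ for $j < |S|$. Apply the operator $(x\,d/dx)^k$ and evaluate at $x=1$: since $(x\,d/dx)^k$ is a combination of $x^j d^j/dx^j$ with $j \le k$, we get
\[
  \sum_n (-1)^{f_S(n)} n^k = 0 \qquad \text{for } 0 \le k \le |S|-1.
\]
Let $A = \{n : f_S(n) = 0\}$ and $B = \{n : f_S(n) = 1\}$. These equalities say the $k$-th power sums of $A$ and $B$ agree for $k = 0, \dots, |S|-1$. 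The case $k=0$ gives $|A| = |B|$.

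For $S \neq \emptyset$ both sets are nonempty, since flipping a bit in $S$ toggles $f_S$. They are distinct because they are disjoint. So $(A,B)$ is a PTE solution of degree $|S|-1$. Degree exactly $|S|-1$ holds in the sense that the $k=|S|$ power sums differ: $P^{(|S|)}(1) \neq 0$ while the lower derivatives vanish, so by the same triangular expansion $\sum_n (-1)^{f_S(n)} n^{|S|} = P^{(|S|)}(1) \neq 0$.

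The only mildly delicate step is this derivative-to-moment translation. It is handled by the triangularity of $(x\,d/dx)^k$ in the basis $x^j d^j/dx^j$, with leading coefficient $1$.
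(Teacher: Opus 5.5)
Your proposal is correct and takes essentially the same route as the paper: you use the same expansion of $\prod_{p<M}(1+\varepsilon_p x^{2^p})$ over binary expansions and the same count of simple zeros at $x=1$. You also make the same triangular passage from derivatives at $x=1$ to power sums, since your Euler operator $(x\,d/dx)^k$ is just the operator form of the paper's comparison of factorial moments with powers. Your extra checks are valid refinements that the paper leaves implicit: that $1+x^{2^p}$ does not vanish at $1$, that the two classes are nonempty and distinct, and that the power sums of degree $|S|$ differ.
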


\begin{proof}
Every integer $n \in \{0,\dots,2^M-1\}$ has a unique binary expansion $n = \sum_{p=0}^{M-1} b_p(n) 2^p$ with $b_p(n) \in \{0,1\}$, and $(-1)^{f_S(n)} = \prod_{p \in S} (-1)^{b_p(n)}$. Set $\varepsilon_p = -1$ for $p \in S$ and $\varepsilon_p = +1$ otherwise. The expansion
\[
  \prod_{p=0}^{M-1} (1 + \varepsilon_p x^{2^p}) = \sum_{(\beta_0,\dots,\beta_{M-1}) \in \{0,1\}^M} \left(\prod_{p} \varepsilon_p^{\beta_p}\right) x^{\sum_p \beta_p 2^p}
\]
runs over integers $n \in \{0,\dots,2^M-1\}$ bijectively via $\beta_p = b_p(n)$. Since $\varepsilon_p = +1$ for $p \notin S$, the coefficient of $x^n$ is $\prod_{p \in S} (-1)^{b_p(n)} = (-1)^{f_S(n)}$. This proves the identity.

Each factor $1 - x^{2^p}$ vanishes at $x=1$ with multiplicity one, so the total order of vanishing is $|S|$. Let $P(x) = \sum_{n=0}^{2^M-1} (-1)^{f_S(n)} x^n$. The $k$-th derivative evaluated at $x = 1$ gives $P^{(k)}(1) = \sum_{n} (-1)^{f_S(n)} n(n-1)\cdots(n-k+1)$, the signed factorial moment of order $k$. The vanishing of $P$ to order $|S|$ at $x = 1$ implies $P^{(k)}(1) = 0$ for $0 \le k \le |S|-1$, hence the equality of factorial moments of order $0$ to $|S|-1$ between the sets $\{n : f_S(n) = 0\}$ and $\{n : f_S(n) = 1\}$. Since the powers $n^k$ form triangular linear combinations of the factorial moments $n^{(j)} = n(n-1)\cdots(n-j+1)$ for $j \le k$, the equality of factorial moments up to order $|S|-1$ is equivalent to the equality of power sums $\sum n^k$ up to degree $|S|-1$.
\end{proof}

For the iterate $a_m$, the set $S$ is defined by the condition $p\,\&\,(m-1)=0$ in the block $[0, 2^M)$.

\begin{corollary}
\label{cor:PTE-am}
For every $m \ge 1$ and every $M \ge 1$, set $S_{m,M} = \{0 \le p < M : p\,\&\,(m-1) = 0\}$. The partition of $[0, 2^M)$ between the generalized evil and odious numbers associated with $a_m$ provides a solution to the Prouhet-Tarry-Escott problem of degree $|S_{m,M}| - 1$.
\end{corollary}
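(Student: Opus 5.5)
The plan is to combine the binary digit formula of Theorem~\ref{thm:masque-binaire} with Lemma~\ref{lem:factorisation-PTE}. The only real work is to check that, on the block $[0,2^M)$, the word $a_m$ coincides with $f_{S_{m,M}}$. We also need $S_{m,M}\neq\emptyset$, so that the lemma actually produces a PTE solution.

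First, fix $m\ge1$, $M\ge1$ and $n\in\{0,\dots,2^M-1\}$. For such $n$ we have $b_p(n)=0$ for every $p\ge M$. So in the XOR of Theorem~\ref{thm:masque-binaire} only indices $p<M$ contribute, and
\[
  a_m(n)=\bigoplus_{\substack{0\le p<M\\ p\,\&\,(m-1)=0}} b_p(n)=f_{S_{m,M}}(n).
\]
Consequently the positions in $[0,2^M)$ of the ones of $a_m$ are exactly $\{n<2^M : f_{S_{m,M}}(n)=1\}$, and those of the zeros are $\{n<2^M : f_{S_{m,M}}(n)=0\}$. These are the initial segments of $u_m$ and $v_m$ lying below $2^M$.

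Next, observe that $0\,\&\,(m-1)=0$. Hence $0\in S_{m,M}$ for every $M\ge1$, and $S_{m,M}\neq\emptyset$. Applying Lemma~\ref{lem:factorisation-PTE} with $S=S_{m,M}$ then gives equality of the power sums of degree $0,\dots,|S_{m,M}|-1$ over the two parts.

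The only point needing a remark is that the PTE problem asks for two distinct sets of equal size. The degree-$0$ equality already gives equal cardinalities, namely $2^{M-1}$ each. Both parts are nonempty, since $0$ lies in the evil part and $1$ lies in the odious part, so the two sets are disjoint and hence distinct. This yields a solution of degree $|S_{m,M}|-1$, as claimed. I expect no genuine obstacle here; the only care needed is the truncation to $p<M$, which makes the finite set $S_{m,M}$ the right one.
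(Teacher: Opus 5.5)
Your proposal is correct and follows the paper's proof: you use the binary digit formula to identify $a_m$ on $[0,2^M)$ with $f_{S_{m,M}}$, then apply Lemma~\ref{lem:factorisation-PTE} with $S=S_{m,M}$. Your extra checks make explicit what the paper's two-line proof leaves implicit: the truncation to $p<M$, that $0\in S_{m,M}$ so $S_{m,M}\neq\emptyset$, and that the two parts have equal size $2^{M-1}$ and are distinct.
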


\begin{proof}
We apply Lemma~\ref{lem:factorisation-PTE} with $S = S_{m,M}$. The order of vanishing at $x = 1$ equals $|S_{m,M}|$ and yields the equality of power sums up to degree $|S_{m,M}| - 1$ between the positions of the $0$s and those of the $1$s of $a_m$ in the block.
\end{proof}

In particular, the case $m = 2$ answers Layman's question.

\begin{corollary}
\label{cor:PTE-Layman}
For $m = 2$ and every $M \ge 1$, $S_{2,M} = \{0 \le p < M : p \text{ even}\}$, hence $|S_{2,M}| = \lceil M/2 \rceil$. The partition of $[0, 2^M)$ between the sequences \seqnum{A158704} and \seqnum{A158705} provides a solution to the Prouhet-Tarry-Escott problem of degree $\lceil M/2 \rceil - 1$.
\end{corollary}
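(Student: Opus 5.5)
The plan is to specialize Corollary~\ref{cor:PTE-am} to $m=2$. Two things then need checking: that the set $S_{2,M}$ and its cardinality are as claimed, and that the generalized evil and odious numbers of $a_2$ are exactly the OEIS sequences \seqnum{A158704} and \seqnum{A158705}.

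\textbf{The set $S_{2,M}$.} With $m-1=1$, the condition $p\,\&\,1=0$ says exactly that $b_0(p)=0$, that is, $p$ is even. So $S_{2,M}=\{0\le p<M : p \text{ even}\}=\{0,2,4,\dots\}\cap[0,M)$. Counting the even integers in $[0,M)$ gives $\lfloor (M+1)/2\rfloor=\lceil M/2\rceil$. This set is nonempty for $M\ge1$, since it contains $p=0$.

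\textbf{Identifying the sequences.} By the introduction, $a_2=\mathcal{T}(a_1)$ is the word $\tm'$ defined through the Thue--Morse evil and odious numbers. This is the OEIS entry \seqnum{A341389}, whose zero positions are \seqnum{A158704} and whose one positions are \seqnum{A158705}. Hence $v_2$ is \seqnum{A158704} and $u_2$ is \seqnum{A158705}. By Theorem~\ref{thm:masque-binaire}, $a_2(n)=\bigoplus_{p \text{ even}} b_p(n)$. I would cross-check this formula against the listed terms of \seqnum{A341389} in Table~\ref{tab:orbite}.

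\textbf{Conclusion.} Lemma~\ref{lem:factorisation-PTE}, applied with $S=S_{2,M}$ as in Corollary~\ref{cor:PTE-am}, shows that the signed polynomial vanishes to order $\lceil M/2\rceil$ at $x=1$. This gives equal power sums up to degree $\lceil M/2\rceil-1$ for the two parts of $[0,2^M)$.

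The only potential obstacle is the identification with Layman's OEIS sequences, which rests on the definitions quoted in the introduction rather than on a computation. The rest is immediate.
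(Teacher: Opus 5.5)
Your proposal is correct and takes the same route as the paper: the corollary is simply Corollary~\ref{cor:PTE-am} specialized to $m=2$. There the condition $p\,\&\,1=0$ selects the even $p$, and $v_2, u_2$ are identified with \seqnum{A158704}, \seqnum{A158705} because $a_2=\mathcal{T}(a_1)$ is \seqnum{A341389}, as stated in the introduction.
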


Iterating the transform thus yields an infinite family of Prouhet-Tarry-Escott identities. By Corollary~\ref{cor:PTE-am}, each level $m \ge 1$ gives a partition of $[0, 2^M)$ into two classes balanced in degree $|S_{m,M}|-1$. At level $m = 3$, these two classes are the sequences \seqnum{A395959} (positions of the $0$s of $a_3$) and \seqnum{A395960} (positions of the $1$s). At level $m = 4$, they are \seqnum{A395962} and \seqnum{A395963}.
\section{Generalized evil and odious numbers}
\label{sec:composition}

For each iterate $m \ge 1$, we denote by $u_m(n)$ the position of the $n$-th $1$ of $a_m$ and by $v_m(n)$ the position of the $n$-th $0$ of $a_m$.

The binary digit formula implies that each pair $\{2n,2n+1\}$ contains exactly one $0$ and one $1$ of the word $a_m$. Indeed, the bit $b_0$ always contributes to the formula, since $0\,\&\,(m-1)=0$. We thus obtain the following formulas.

\begin{lemma}
\label{lem:pairing-uv}
For every $m\ge 1$ and every $n\ge 0$,
\[
  u_m(n)=2n+1-a_m(2n),
  \qquad
  v_m(n)=2n+a_m(2n).
\]
\end{lemma}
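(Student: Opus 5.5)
The proof uses the pair-balance property of $a_m$ and then counts letters. By Lemma~\ref{lem:pair-balance} applied iteratively from $a_0$, or directly from Theorem~\ref{thm:masque-binaire} since $p=0$ always satisfies $0\,\&\,(m-1)=0$, we have
\[
  a_m(2k+1)=a_m(2k)\oplus 1 \qquad \text{for every } k\ge 0 .
\]
So each pair $\{2k,2k+1\}$ carries exactly one $0$ and one $1$ of $a_m$.

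Next we count. Among the $2n$ positions $0,\dots,2n-1$, which form the pairs $k=0,\dots,n-1$, there are exactly $n$ zeros and $n$ ones. These are the zeros of indices $0,\dots,n-1$ and the ones of indices $0,\dots,n-1$. Hence the zero of index $n$ and the one of index $n$ both lie in the next pair, $\{2n,2n+1\}$. This is the same argument as in the proof of Lemma~\ref{lem:pair-balance}, and it gives
\[
  \{v_m(n),u_m(n)\}=\{2n,2n+1\}.
\]

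Finally we split on the value of $a_m(2n)$.

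\begin{itemize}
\item If $a_m(2n)=0$, then the zero sits at $2n$ and the one at $2n+1$. So $v_m(n)=2n=2n+a_m(2n)$ and $u_m(n)=2n+1=2n+1-a_m(2n)$.
\item If $a_m(2n)=1$, the positions are swapped. So $v_m(n)=2n+1=2n+a_m(2n)$ and $u_m(n)=2n=2n+1-a_m(2n)$.
\end{itemize}

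In both cases the stated formulas hold. There is no real obstacle here; the only point needing care is that the counting argument requires balance in every earlier pair, and this is exactly what the pair-balance invariant supplies.
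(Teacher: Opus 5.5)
Your proposal is correct and follows essentially the paper's proof: the paper also obtains $a_m(2n+1)=1-a_m(2n)$ from the fact that bit $b_0$ always contributes ($0\,\&\,(m-1)=0$), then splits on the value of $a_m(2n)$. Your explicit counting step, placing the zero and the one of index $n$ in the pair $\{2n,2n+1\}$, is left implicit in the paper's proof (it appears in the proof of Lemma~\ref{lem:pair-balance}), so spelling it out is a welcome addition.
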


\begin{proof}
Since $0\,\&\,(m-1)=0$, the two terms $a_m(2n)$ and $a_m(2n+1)$ differ exactly by the bit $b_0$. We thus have
\[
  a_m(2n+1)=1-a_m(2n).
\]
Thus, in the pair $\{2n,2n+1\}$, one of the two positions carries a $0$ and the other a $1$. If $a_m(2n)=0$, then $v_m(n)=2n$ and $u_m(n)=2n+1$. Otherwise, the two roles are exchanged. The stated formulas follow in both cases.
\end{proof}

We use these relations to express the compositions of $u_m$ and $v_m$ via $2$-automatic correction functions $c_m$. Table~\ref{tab:correction} provides examples.

\begin{definition}
We define, for each $m\ge 1$, a periodic subset $C_m \subset \N$ and a correction function $c_m$.

We first set
\[
  C_1=\emptyset,
  \qquad
  c_1(n)=0.
\]

If $m\ge 2$ is even, we set
\[
  K=\lceil \log_2 m\rceil, \qquad P=2^K,
\]
and $C_m$ is the periodic subset of period $P$ defined by
\[
  C_m\cap[0,P)
  =
  \{\,q\in\{0,1,\dots,P-1\} : (q+2)\,\&\,(m-1)=0\,\}.
\]

If $m\ge 3$ is odd, we set
\[
  C_m=C_{m-1}.
\]

In all cases, the correction function is
\[
  c_m(n)=\bigoplus_{q\in C_m} b_q(n).
\]
The sum is finite for each $n$, since only finitely many bits $b_q(n)$ are nonzero. In particular, $c_m(n) \in \{0, 1\}$ for every $m \ge 1$ and every $n \ge 0$.
\end{definition}

\begin{example}
The first correction sets are
\[
  C_1=\emptyset,
\]
\[
  C_2=C_3=\{0,2,4,6,\dots\},
\]
\[
  C_4=C_5=\{2,6,10,14,\dots\},
\]
and
\[
  C_6=C_7=\{0,6,8,14,16,22,\dots\}.
\]
\end{example}

Table~\ref{tab:correction} shows the first $28$ values of $c_1, c_2, c_3, c_4$.

\begin{table}[H]
\centering
\small
\setlength{\tabcolsep}{3pt}
\begin{tabular}{c|cccccccccccccccccccccccccccc}
 & 0 & 1 & 2 & 3 & 4 & 5 & 6 & 7 & 8 & 9 & 10 & 11 & 12 & 13 & 14 & 15 & 16 & 17 & 18 & 19 & 20 & 21 & 22 & 23 & 24 & 25 & 26 & 27 \\
\hline
$c_{1}$ & 0 & 0 & 0 & 0 & 0 & 0 & 0 & 0 & 0 & 0 & 0 & 0 & 0 & 0 & 0 & 0 & 0 & 0 & 0 & 0 & 0 & 0 & 0 & 0 & 0 & 0 & 0 & 0 \\
$c_{2}$ & 0 & 1 & 0 & 1 & 1 & 0 & 1 & 0 & 0 & 1 & 0 & 1 & 1 & 0 & 1 & 0 & 1 & 0 & 1 & 0 & 0 & 1 & 0 & 1 & 1 & 0 & 1 & 0 \\
$c_{3}$ & 0 & 1 & 0 & 1 & 1 & 0 & 1 & 0 & 0 & 1 & 0 & 1 & 1 & 0 & 1 & 0 & 1 & 0 & 1 & 0 & 0 & 1 & 0 & 1 & 1 & 0 & 1 & 0 \\
$c_{4}$ & 0 & 0 & 0 & 0 & 1 & 1 & 1 & 1 & 0 & 0 & 0 & 0 & 1 & 1 & 1 & 1 & 0 & 0 & 0 & 0 & 1 & 1 & 1 & 1 & 0 & 0 & 0 & 0 \\
\end{tabular}
\caption{The first $28$ terms of the correction functions $c_1, c_2, c_3, c_4$.}
\label{tab:correction}
\end{table}

We require two lemmas. The first controls the value of $a_m$ when going from $4n$ to $4n+2$.

\begin{lemma}
\label{lem:am-4n2}
For every $m\ge 1$ and every $n\ge 0$,
\[
  a_m(4n+2)=a_m(4n)\oplus \varepsilon_m,
\]
where
\[
  \varepsilon_m=
  \begin{cases}
  1, & \text{if } m \text{ is odd},\\
  0, & \text{if } m \text{ is even}.
  \end{cases}
\]
\end{lemma}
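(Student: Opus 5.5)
We apply the binary digit formula of Theorem~\ref{thm:masque-binaire} directly. The integers $4n$ and $4n+2$ have the same binary digits except at position $p=1$: $b_1(4n)=0$ and $b_1(4n+2)=1$, while $b_p(4n)=b_p(4n+2)$ for every $p\neq 1$ (the digit $b_0$ is $0$ in both cases, and there is no carry since $b_1(4n)=0$). Taking the XOR of the two formulas, every term with $p\neq 1$ cancels. We obtain
\[
  a_m(4n+2)\oplus a_m(4n)=
  \begin{cases}
  1, & \text{if } 1\,\&\,(m-1)=0,\\
  0, & \text{otherwise}.
  \end{cases}
\]

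It remains to interpret the condition $1\,\&\,(m-1)=0$. It says that bit $0$ of $m-1$ vanishes, that is, $m-1$ is even, or equivalently $m$ is odd. So the difference equals $\varepsilon_m$ as defined in the statement, and the identity holds for every $m\ge1$ and $n\ge0$.

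There is no real obstacle. The only point to check is that passing from $4n$ to $4n+2$ changes exactly one binary digit, and this holds because $4n\equiv 0 \pmod 4$. As a sanity check, the recurrence $a_{m+1}(n)=a_{m+1}(\lfloor n/2\rfloor)\oplus a_m(n)$ from the combinatorial proof gives the same parity pattern. Table~\ref{tab:orbite} also agrees: $a_1(0)=0$ and $a_1(2)=1$, while $a_2(0)=a_2(2)=0$.
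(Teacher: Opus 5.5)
Your proof is correct and is essentially the paper's argument. Since $4n$ and $4n+2$ differ only in bit $1$, the binary digit formula shows their $a_m$-values differ exactly when $1\,\&\,(m-1)=0$, that is, when $m$ is odd.
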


\begin{proof}
The integers $4n$ and $4n+2$ differ only by the bit at position $1$, that is, $b_1(4n) \neq b_1(4n+2)$ while $b_p(4n) = b_p(4n+2)$ for all $p \neq 1$. By the binary digit formula, $a_m(4n) \neq a_m(4n+2)$ if and only if the term $b_1$ contributes to the formula, that is, if and only if
\[
  1\,\&\,(m-1)=0.
\]
This is equivalent to $m-1$ being even, or equivalently $m$ being odd.
\end{proof}

The second lemma is a bitwise identity used only in the odd case.

\begin{lemma}
\label{lem:mask-even-shift}
Let $M\ge 2$ be an even integer. For every integer $x\ge 1$,
\[
\mathbf{1}_{x\,\&\,M=0}
\oplus
\mathbf{1}_{(x-1)\,\&\,M=0}
=
\mathbf{1}_{x\,\&\,(M-1)=0}.
\]
\end{lemma}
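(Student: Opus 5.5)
The plan is a direct bitwise case analysis. Write $x = 2^t y$ with $y$ odd, so $t$ is the $2$-adic valuation of $x$ (this needs $x\ge 1$). Let $s\ge 1$ be the position of the lowest set bit of $M$; here $s\ge1$ because $M$ is even. The proof rests on two descriptions.

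First, $x$ and $x-1$ agree at every bit position $>t$. At position $t$, $x$ has a $1$ and $x-1$ has a $0$. At positions $<t$, $x$ has $0$s and $x-1$ has $1$s.

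Second, $M$ and $M-1$ agree at every position $>s$. At position $s$, $M$ has a $1$ and $M-1$ has a $0$. At positions $<s$, $M$ has $0$s and $M-1$ has $1$s.

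Let $H$ be the indicator that the bits of $x$ above position $t$ meet no set bit of $M$ above position $t$. From the first description, the left-hand side factors as
\[
  \mathbf{1}_{x\,\&\,M=0}\oplus\mathbf{1}_{(x-1)\,\&\,M=0}
  = H\cdot\bigl(\mathbf{1}_{b_t(M)=0}\oplus \mathbf{1}_{M\,\&\,(2^t-1)=0}\bigr).
\]
Since $x$ has no set bits below $t$, the right-hand side is $\mathbf{1}_{x\,\&\,(M-1)=0}$. It depends only on bit $t$ of $M-1$ and on the bits of $M-1$ above $t$ that meet the high bits of $x$.

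I would then compare the two sides in three cases according to the order of $t$ and $s$.

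\emph{Case $t<s$.} Here $b_t(M)=0$ and $M\,\&\,(2^t-1)=0$, so the two indicators coincide and the left side is $0$. On the right, bit $t$ of $M-1$ is $1$ because $t<s$, and $x$ has bit $t$ set, so the right side is also $0$.

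\emph{Case $t=s$.} Now $b_t(M)=1$ and $M$ has no bits below $t$, so the left side equals $H$. On the right, $M-1$ has bit $s=t$ equal to $0$ and agrees with $M$ above $s$. Hence $x\,\&\,(M-1)=0$ exactly when $H=1$.

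\emph{Case $t>s$.} Then $M\,\&\,(2^t-1)\neq 0$, because bit $s$ of $M$ is set, so the left side is $H\cdot\mathbf{1}_{b_t(M)=0}=\mathbf{1}_{x\,\&\,M=0}$. On the right, $M-1$ agrees with $M$ at all positions $\ge s+1$, which include all positions $\ge t$. Since $x$ has no bits below $t$, we get $x\,\&\,(M-1)=x\,\&\,M$, and the two sides agree.

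The only delicate point is bookkeeping the borrow patterns of $x-1$ and $M-1$ consistently. I expect the case $t=s$ to need the most care, since it is where the two borrow positions coincide. The evenness of $M$ is used only to ensure $s\ge 1$, so that $M-1$ is well understood.
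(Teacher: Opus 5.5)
Your proof is correct and is essentially the paper's argument. Your three cases $\nu_2(x)<\nu_2(M)$, $\nu_2(x)=\nu_2(M)$, $\nu_2(x)>\nu_2(M)$ are exactly the paper's cases $2^{\nu_2(M)}\nmid x$, then $x=2^{\nu_2(M)}X$ with $X$ odd, or with $X$ even. The only difference is that the paper divides out $2^{\nu_2(M)}$ and works with the odd part of $M$, while you track the borrow patterns of $x-1$ and $M-1$ directly through the common factor $H$. As a minor aside, your description of $M-1$ remains valid when $\nu_2(M)=0$, so the evenness of $M$ is not actually used anywhere in your argument.
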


\begin{proof}
Write $M = 2^t s$ with $t \ge 1$ and $s$ odd.

If $x$ is not divisible by $2^t$, write $x = 2^t Q + r$ with $1 \le r < 2^t$. Then $x-1 = 2^t Q + (r-1)$, so $x$ and $x-1$ have the same quotient $Q$ in the division by $2^t$. The two indicators on the left-hand side are then equal, and their XOR is $0$. Moreover, one of the $t$ low-order bits of $x$ equals $1$. Since $M-1$ has all its $t$ low-order bits equal to $1$, we have $x \,\&\, (M-1) \ne 0$, and the right-hand side is also $0$.

Suppose now that $x = 2^t X$. We have
\[
  x \,\&\, M = 2^t (X \,\&\, s).
\]
Moreover
\[
  x - 1 = 2^t (X - 1) + (2^t - 1),
\]
and $M = 2^t s$ has all its $t$ lowest bits zero. Thus
\[
  (x-1) \,\&\, M = 2^t ((X-1) \,\&\, s).
\]
We obtain the two indicator equalities
\[
  \mathbf{1}_{x \,\&\, M = 0} = \mathbf{1}_{X \,\&\, s = 0}, \qquad \mathbf{1}_{(x-1) \,\&\, M = 0} = \mathbf{1}_{(X-1) \,\&\, s = 0}.
\]
Write $s = 1 + 2q$. If $X = 2Y$, then
\[
  X \,\&\, s = 2(Y \,\&\, q),
\]
while $X-1$ and $s$ are odd, so $(X-1) \,\&\, s \ne 0$. The left-hand side then equals $\mathbf{1}_{Y \,\&\, q = 0}$. If $X = 2Y + 1$, then $X$ and $s$ are odd, so $X \,\&\, s \ne 0$, while
\[
  (X-1) \,\&\, s = (2Y) \,\&\, (1 + 2q) = 2(Y \,\&\, q).
\]
The left-hand side again equals $\mathbf{1}_{Y \,\&\, q = 0}$. For the right-hand side, we have
\[
  M - 1 = 2^t (s - 1) + (2^t - 1),
\]
so $x \,\&\, (M-1) = 0$ is equivalent to
\[
  X \,\&\, (s-1) = 0.
\]
Since the lowest-order bit of $s-1 = 2q$ is zero, $X \,\&\, 2q$ depends only on the bits of position $\ge 1$ of $X$, that is, on $\lfloor X/2 \rfloor$. This condition is therefore equivalent to
\[
  \lfloor X/2 \rfloor \,\&\, q = 0.
\]
This proves the identity.
\end{proof}

The functions $c_m$ can be read directly from $a_m$.

\begin{lemma}
\label{lem:correction-as-values}
For every $n\ge 0$, we have the following identities.

If $m$ is even, then
\[
  c_m(n)=a_m(4n).
\]

If $m$ is odd, then
\[
  c_m(n)=a_m(4n)\oplus a_m(2n).
\]
\end{lemma}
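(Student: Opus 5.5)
Both identities follow by comparing bit masks, using the binary digit formula of Theorem~\ref{thm:masque-binaire}. The key observation is that $b_q(n)=b_{q+2}(4n)$ and $b_0(4n)=b_1(4n)=0$. Hence
\[
  a_m(4n)=\bigoplus_{\substack{q\ge0\\(q+2)\,\&\,(m-1)=0}} b_q(n).
\]
So $a_m(4n)=\sigma_{S}(n)$ with $S=\{q\ge 0:(q+2)\,\&\,(m-1)=0\}$. Similarly, $a_m(2n)=\sigma_{S'}(n)$ with $S'=\{q\ge0:(q+1)\,\&\,(m-1)=0\}$. The remaining work is to show that $C_m$ equals $S$ when $m$ is even, and equals the symmetric difference $S\,\triangle\,S'$ when $m$ is odd. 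Since $\sigma$ of a symmetric difference is the XOR of the two $\sigma$'s, this gives both identities.

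\emph{Even case.} Let $m\ge2$ be even, $K=\lceil\log_2 m\rceil$ and $P=2^K$. Then $m-1<2^K$, so the condition $(q+2)\,\&\,(m-1)=0$ depends only on $(q+2)\bmod 2^K$. It is therefore periodic in $q$ with period $P$. On $[0,P)$ it is exactly the defining condition of $C_m$, so $S=C_m$. For $m=2$ we have $K=1$, and the statement still holds (parity of $q$). Hence $c_m(n)=a_m(4n)$.

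\emph{Odd case.} For $m=1$ the mask is $0$, so $S=S'=\N$ and the XOR is $0=c_1(n)$. For odd $m\ge3$, set $M=m-1$, which is even and $\ge2$. Since $C_m=C_{m-1}$, the even case applied to $m-1$ gives
\[
  C_m=\{q:(q+2)\,\&\,(M-1)=0\}.
\]
We need $q\in S\triangle S'$ if and only if $(q+2)\,\&\,(M-1)=0$, that is,
\[
  \mathbf 1_{(q+2)\&M=0}\oplus\mathbf 1_{(q+1)\&M=0}=\mathbf 1_{(q+2)\&(M-1)=0}.
\]
This is Lemma~\ref{lem:mask-even-shift} with $x=q+2\ge1$. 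One must check that the periodic description of $C_{m-1}$ from the definition coincides with the set $\{q:(q+2)\,\&\,(m-2)=0\}$ over all of $\N$, not only on one period. This holds because $m-2<2^{\lceil\log_2(m-1)\rceil}$, by the same periodicity argument as in the even case.

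The main obstacle is the index bookkeeping in this last step: the mask in the binary digit formula is $m-1$, the definition of $C_{m-1}$ uses the mask $m-2$, and the lemma's $M$ must be matched to $m-1$ rather than to $m$. Once these are aligned, the odd case is a direct application of Lemma~\ref{lem:mask-even-shift}.
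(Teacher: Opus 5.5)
Your proposal is correct and follows the paper's proof essentially step for step: the same two-position bit shift and period-$P$ argument in the even case, and Lemma~\ref{lem:mask-even-shift} with $M=m-1$, $x=q+2$ for odd $m\ge 3$. The paper phrases the odd case as $a_m(4n)\oplus a_m(2n)=a_{m-1}(4n)$ rather than as your set identity $C_m=S\,\triangle\,S'$, but the content is the same. The only small difference is at $m=1$, where the paper uses the Thue--Morse recurrence $a_1(2r)=a_1(r)$ instead of your observation that the mask is $0$.
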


\begin{proof}
Suppose first that $m$ is even. Then $m-1$ is odd. Multiplication by $4$ shifts the binary expansion by two positions, so $b_0(4n) = b_1(4n) = 0$ and $b_p(4n) = b_{p-2}(n)$ for $p \ge 2$. The binary digit formula therefore gives
\[
  a_m(4n) = \bigoplus_{\substack{p \ge 2 \\ p \,\&\, (m-1) = 0}} b_{p-2}(n) = \bigoplus_{\substack{q \ge 0 \\ (q+2) \,\&\, (m-1) = 0}} b_q(n).
\]
Since $m-1 < P$, the condition $(q+2) \,\&\, (m-1) = 0$ depends only on the $K$ low-order bits of $q+2$. It is therefore periodic in $q$ with period $P$. By the definition of $C_m$, the set of indices $q$ that occur is exactly $C_m$, and the right-hand side equals $c_m(n)$.

Suppose now that $m$ is odd. The case $m = 1$ is immediate. Indeed, $c_1(n) = 0$, and the Thue-Morse word satisfies $a_1(2r) = a_1(r)$. Taking $r = 2n$ and then $r = n$, we obtain $a_1(4n) = a_1(2n)$.

We can therefore assume $m \ge 3$. Then $m-1$ is even and, by definition, $C_m = C_{m-1}$, so $c_m = c_{m-1}$. Since $m-1$ is even, the case already treated gives $c_{m-1}(n) = a_{m-1}(4n)$. We verify that
\[
  a_m(4n) \oplus a_m(2n) = a_{m-1}(4n).
\]
Applying the binary digit formula to $4n$ and to $2n$, and using the relations $b_p(4n) = b_{p-2}(n)$ for $p \ge 2$ and $b_p(2n) = b_{p-1}(n)$ for $p \ge 1$, we have
\[
  a_m(4n) = \bigoplus_{\substack{q \ge 0 \\ (q+2) \,\&\, (m-1) = 0}} b_q(n),
\]
and
\[
  a_m(2n) = \bigoplus_{\substack{q \ge 0 \\ (q+1) \,\&\, (m-1) = 0}} b_q(n).
\]
In the XOR $a_m(4n) \oplus a_m(2n)$, the term $b_q(n)$ appears with coefficient
\[
  \mathbf{1}_{(q+2) \,\&\, (m-1) = 0} \oplus \mathbf{1}_{(q+1) \,\&\, (m-1) = 0}.
\]
Since $m \ge 3$ is odd, $M = m-1$ is even with $M \ge 2$, and $x = q+2 \ge 2$, so Lemma~\ref{lem:mask-even-shift} applies. The coefficient is therefore
\[
  \mathbf{1}_{(q+2) \,\&\, (m-2) = 0}.
\]
Thus
\[
  a_m(4n) \oplus a_m(2n) = \bigoplus_{\substack{q \ge 0 \\ (q+2) \,\&\, (m-2) = 0}} b_q(n).
\]
By the even case already established, applied to $m-1$, this sum equals $a_{m-1}(4n)$. Since $c_m(n) = c_{m-1}(n) = a_{m-1}(4n)$, the result follows.
\end{proof}

We can now establish the composition laws.

\begin{theorem}
\label{thm:compositions-evil-odious}
For \(\alpha\in\{0,1\}\), set
\[
  w_m^0=v_m,\qquad w_m^1=u_m.
\]
Let
\[
  \varepsilon_m \equiv m \pmod 2,\qquad \varepsilon_m\in\{0,1\}.
\]
Then, for every \(m\ge1\), every \(n\ge0\), and every
\(\alpha,\beta\in\{0,1\}\),
\[
  w_m^\alpha\bigl(w_m^\beta(n)\bigr)
  =
  2w_m^\beta(n)
  +
  \bigl(c_m(n)\oplus \alpha\oplus \varepsilon_m\beta\bigr).
\]
The last term is read as an element of \(\{0,1\}\).
\end{theorem}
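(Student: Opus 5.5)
By Lemma~\ref{lem:pairing-uv}, for any $k\ge0$ we have $w_m^\alpha(k)=2k+(a_m(2k)\oplus\alpha)$. Indeed, $v_m(k)=2k+a_m(2k)$ and $u_m(k)=2k+1-a_m(2k)=2k+(1\oplus a_m(2k))$. Applying this with $k=w_m^\beta(n)$ gives
\[
  w_m^\alpha\bigl(w_m^\beta(n)\bigr)=2w_m^\beta(n)+\bigl(a_m(2w_m^\beta(n))\oplus\alpha\bigr).
\]
So the whole statement reduces to the single identity
\[
  a_m\bigl(2w_m^\beta(n)\bigr)=c_m(n)\oplus\varepsilon_m\beta .
\]
The strategy is to compute the left-hand side from the explicit value of $w_m^\beta(n)$ and then identify it with $c_m$ using Lemma~\ref{lem:correction-as-values}.

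Set $\delta=a_m(2n)\oplus\beta$, so that $w_m^\beta(n)=2n+\delta$ and hence $2w_m^\beta(n)=4n+2\delta$. If $\delta=0$, the left-hand side is $a_m(4n)$. If $\delta=1$, it is $a_m(4n+2)=a_m(4n)\oplus\varepsilon_m$ by Lemma~\ref{lem:am-4n2}. In both cases we get the uniform expression
\[
  a_m\bigl(2w_m^\beta(n)\bigr)=a_m(4n)\oplus\varepsilon_m\delta=a_m(4n)\oplus\varepsilon_m a_m(2n)\oplus\varepsilon_m\beta .
\]

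It remains to show that $a_m(4n)\oplus\varepsilon_m a_m(2n)=c_m(n)$, which is exactly Lemma~\ref{lem:correction-as-values} split by parity. For $m$ even, $\varepsilon_m=0$ and the lemma gives $c_m(n)=a_m(4n)$. For $m$ odd, $\varepsilon_m=1$ and the lemma gives $c_m(n)=a_m(4n)\oplus a_m(2n)$, including the case $m=1$. Substituting into the display above yields the claimed identity for all $\alpha,\beta\in\{0,1\}$, and reading the final bit as an integer in $\{0,1\}$ gives the stated formula.

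The real difficulty, the bitwise identity behind the odd case, has already been absorbed into Lemma~\ref{lem:correction-as-values} through Lemma~\ref{lem:mask-even-shift}. The main point still needing care is the bookkeeping in the middle step: the XOR correction $\varepsilon_m\delta$ from Lemma~\ref{lem:am-4n2} must be expanded linearly over $\mathbb{F}_2$ as $\varepsilon_m a_m(2n)\oplus\varepsilon_m\beta$, which is valid because both factors are bits.
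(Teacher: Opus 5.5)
Your proposal is correct and follows essentially the same route as the paper: you use Lemma~\ref{lem:pairing-uv} to reduce to computing $a_m\bigl(2w_m^\beta(n)\bigr)$ with $2w_m^\beta(n)=4n+2\bigl(a_m(2n)\oplus\beta\bigr)$, and you finish with Lemmas~\ref{lem:am-4n2} and~\ref{lem:correction-as-values}. The only difference is cosmetic: you treat both parities at once through $a_m(4n)\oplus\varepsilon_m\delta$ before splitting by parity, whereas the paper separates $m$ even and $m$ odd from the start.
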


\begin{proof}
Set
\[
  \epsilon=a_m(2n).
\]
By Lemma~\ref{lem:pairing-uv},
\[
  w_m^\beta(n)=2n+(\epsilon\oplus\beta).
\]
Applying the same lemma again gives
\[
  w_m^\alpha\bigl(w_m^\beta(n)\bigr)
  =2w_m^\beta(n)
   +\bigl(a_m(2w_m^\beta(n))\oplus\alpha\bigr).
\]
It remains to compute \(a_m(2w_m^\beta(n))\).

If \(m\) is even, then Lemma~\ref{lem:am-4n2} gives
\[
  a_m(4n+2)=a_m(4n).
\]
Since \(2w_m^\beta(n)\) is either \(4n\) or \(4n+2\), we get
\[
  a_m(2w_m^\beta(n))=a_m(4n)=c_m(n)
\]
by Lemma~\ref{lem:correction-as-values}. This is
\(c_m(n)\oplus \varepsilon_m\beta\), since \(\varepsilon_m=0\).

If \(m\) is odd, write \(\gamma=a_m(4n)\). Lemma~\ref{lem:am-4n2}
gives
\[
  a_m(4n+2)=\gamma\oplus1.
\]
Also, by Lemma~\ref{lem:correction-as-values},
\[
  c_m(n)=a_m(4n)\oplus a_m(2n)=\gamma\oplus\epsilon.
\]
Since
\[
  2w_m^\beta(n)=4n+2(\epsilon\oplus\beta),
\]
we have
\[
  a_m(2w_m^\beta(n))
  =\gamma\oplus\epsilon\oplus\beta
  =c_m(n)\oplus\beta.
\]
This is again \(c_m(n)\oplus\varepsilon_m\beta\), since
\(\varepsilon_m=1\).

In both cases,
\[
  a_m(2w_m^\beta(n))=c_m(n)\oplus\varepsilon_m\beta.
\]
Substitution gives the stated formula.
\end{proof}

Writing this out gives the following identities. If \(m\) is even, then
\[
\begin{aligned}
  u_m(u_m(n)) &= 2u_m(n)+1-c_m(n),\\
  v_m(v_m(n)) &= 2v_m(n)+c_m(n),\\
  u_m(v_m(n)) &= 2v_m(n)+1-c_m(n),\\
  v_m(u_m(n)) &= 2u_m(n)+c_m(n).
\end{aligned}
\]
If \(m\) is odd, then
\[
\begin{aligned}
  u_m(u_m(n)) &= 2u_m(n)+c_m(n),\\
  v_m(v_m(n)) &= 2v_m(n)+c_m(n),\\
  u_m(v_m(n)) &= 2v_m(n)+1-c_m(n),\\
  v_m(u_m(n)) &= 2u_m(n)+1-c_m(n).
\end{aligned}
\]
\section{Morphic structure of the iterates}
\label{sec:morphismes}
\label{sec:morphic}

The binary digit formula of Theorem~\ref{thm:masque-binaire} shows that each iterate $a_m$ is $2$-automatic; see \cite{AlloucheShallit2003} for background. The uniform morphism generating $a_m$ is as follows.

For a level $m \ge 1$, set $k = \lceil \log_2 m \rceil$ and $B = 2^{2^k}$. Every integer $N \ge 0$ decomposes uniquely as $N = Bn + r$ with $0 \le r < B$. We refer to the segment $[Bn, Bn + B)$ as the block of index $n$ in $a_m$.
The condition $p\,\&\,(m-1)=0$ depends only on $p$ modulo $2^k$, since $m-1<2^k$. Writing $h=2^k$ and $B=2^h$, the decomposition $N=Bn+r$ separates the low $h$ binary digits, carried by $r$, from the remaining digits, carried by $n$. Hence the condition $p\,\&\,(m-1)=0$ splits as
\[
  a_m(Bn+r)=a_m(n)\oplus \lambda_m(r),
\]
where $\lambda_m(r) = \bigoplus_{p\,\&\,(m-1)=0} b_p(r)$ is the local pattern on the block.

This relation implies the following result.

\begin{proposition}
For every $m \ge 1$, setting $k = \lceil \log_2 m \rceil$ and $B = 2^{2^k}$, the word $a_m$ is the fixed point starting with $0$ of the uniform morphism $\mu_m$ of length $B$ defined on $\{0,1\}$ by
\begin{align*}
  \mu_m(0) &= \lambda_m(0)\lambda_m(1)\dots\lambda_m(B-1), \\
  \mu_m(1) &= \overline{\lambda_m(0)}\,\overline{\lambda_m(1)}\dots\overline{\lambda_m(B-1)},
\end{align*}
where
\[
  \lambda_m(r)=\bigoplus_{p\,\&\,(m-1)=0} b_p(r),
\]
and \(\overline{0}=1,\ \overline{1}=0\).
\end{proposition}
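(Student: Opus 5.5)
The plan is to derive everything from the block relation $a_m(Bn+r)=a_m(n)\oplus\lambda_m(r)$ stated just before the proposition. Once that relation is justified, the fixed-point statement follows in a few lines.

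First, I will check the block relation carefully. Put $h=2^k$, so that $m-1<2^k=h$ and $B=2^h$. For $N=Bn+r$ with $0\le r<B$, the bits satisfy $b_p(N)=b_p(r)$ for $p<h$ and $b_{p}(N)=b_{p-h}(n)$ for $p\ge h$. The key point is that the mask condition is invariant under the shift by $h$: since $m-1<2^k$, the number $m-1$ has no bits at positions $\ge k$. Adding $h=2^k$ to $p$ changes only bits of $p$ at positions $\ge k$, so $(p+h)\,\&\,(m-1)=p\,\&\,(m-1)$. Splitting the XOR of Theorem~\ref{thm:masque-binaire} over $p<h$ and $p\ge h$ then gives
\[
a_m(Bn+r)=\bigoplus_{\substack{p<h\\ p\,\&\,(m-1)=0}} b_p(r)\ \oplus \bigoplus_{\substack{q\ge0\\ q\,\&\,(m-1)=0}} b_q(n).
\]
The second XOR is $a_m(n)$. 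The first XOR is exactly $\lambda_m(r)$, because $b_p(r)=0$ for $p\ge h$.

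Second, I will translate this into the morphism statement. The relation says that the block $a_m(Bn)\cdots a_m(Bn+B-1)$ equals $\mu_m(a_m(n))$. If $a_m(n)=0$, this block is $\lambda_m(0)\cdots\lambda_m(B-1)$. If $a_m(n)=1$, it is the complemented word. Concatenating the blocks over $n\ge0$ gives $\mu_m(a_m)=a_m$, so $a_m$ is a fixed point of $\mu_m$.

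Third, I will address uniqueness and the phrase ``starting with $0$''. Note that $\mu_m(0)$ begins with $\lambda_m(0)=0$, and $B\ge2$, so $\mu_m$ is prolongable on $0$. Its iterates $\mu_m^j(0)$ therefore converge to a unique infinite fixed point beginning with $0$. Since $a_m(0)=0$, that fixed point must be $a_m$. To make the identification explicit, I will show by induction on $j$ that the prefix of $a_m$ of length $B^j$ equals $\mu_m^j(0)$, using the block relation for the inductive step.

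The only delicate step is the shift-invariance of the mask condition, and specifically the need for $h$ to be a multiple of $2^k$. That is exactly why $h=2^k$ is chosen rather than $h=k$. Everything else is routine bookkeeping.
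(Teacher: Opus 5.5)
Your proposal is correct and follows the paper's proof essentially step by step: you split the binary digit formula at $h=2^k$ to obtain $a_m(Bn+r)=a_m(n)\oplus\lambda_m(r)$, read off $\mu_m(a_m)=a_m$ block by block, and identify $a_m$ as the fixed point using prolongability on $0$. Your justification that $(p+2^k)\,\&\,(m-1)=p\,\&\,(m-1)$, since adding $2^k$ leaves the low $k$ bits unchanged, and your induction on the prefixes $\mu_m^j(0)$ are somewhat more explicit than the paper's wording, but they do not change the argument.
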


\begin{proof}
We derive the relation $a_m(Bn + r) = a_m(n) \oplus \lambda_m(r)$ for $0 \le r < B$ from the binary digit formula. Write $Bn + r$ in base $2$. The bits at positions $0, \dots, h - 1$ encode $r$ and the bits at positions $h, h+1, \dots$ encode $n$. Split the indices $p$ in the binary digit formula into the ranges $p < h$ and $p \ge h$. The first range contributes
\[
  \bigoplus_{\substack{p < h \\ p\,\&\,(m-1) = 0}} b_p(Bn + r) = \bigoplus_{\substack{p < h \\ p\,\&\,(m-1) = 0}} b_p(r) = \lambda_m(r).
\]
For $p = h + q$ with $q \ge 0$, we have $p \,\&\, (m-1) = q \,\&\, (m-1)$ since $m - 1 < h$, and $b_{h + q}(Bn + r) = b_q(n)$. The second range therefore contributes $a_m(n)$. Summing gives $a_m(Bn + r) = a_m(n) \oplus \lambda_m(r)$.

The block of length $B$ starting at position $Bn$ in $a_m$ is the pattern $\lambda_m(0) \dots \lambda_m(B-1)$ when $a_m(n) = 0$, and its complement when $a_m(n) = 1$. Since $a_m(0) = 0$, the word $a_m$ is a fixed point of $\mu_m$ starting with $0$. The morphism $\mu_m$ is prolongable on $0$, since $\mu_m(0)$ begins with $\lambda_m(0) = 0$. Hence $\lim_{t \to \infty} \mu_m^t(0)$ is the unique fixed point of $\mu_m$ beginning with $0$, and coincides with $a_m$.
\end{proof}

We examine this structure for the first levels.

\begin{example}
For $m = 2^k$, the binary digit formula selects only the bit at position $p=0$ within the remainder $r$, that is, its parity. The local pattern is therefore a strict alternation $\lambda_m(r) = r \bmod 2$.
\begin{itemize}
    \item For $m=1$ ($k=0, B=2$), we recover the Thue-Morse morphism, with $\mu_1(0) = 01$ and $\mu_1(1) = 10$.
    \item For $m=2$ ($k=1, B=4$), we obtain the Layman sequence morphism, with $\mu_2(0) = 0101$ and $\mu_2(1) = 1010$.
    \item For $m=4$ ($k=2, B=16$), the morphism extends over 16 letters, with $\mu_4(0) = 0101010101010101$ and $\mu_4(1) = 1010101010101010$.
\end{itemize}
\end{example}

\begin{remark}
For $m = 2^k$, the relation $a_{2^k}(Bn+r) = a_{2^k}(n) \oplus (r \bmod 2)$ shows that each letter of $a_{2^k}$ is replaced by an alternating block of length $B = 2^{2^k}$, or by its complement. If $m$ is not a power of two, then the binary digit formula has several active bit positions below $2^k$. The block pattern $\lambda_m$ is then no longer a strict alternation. This is the case treated in Section~\ref{subsec:reduction-generale}.
\end{remark}

\begin{example}
For the iterate $a_3$ ($m=3$), we have $k=2$, $B=16$, and $m-1 = 10_2$,
so the binary digit formula reduces to $\lambda_3(r) = b_0(r) \oplus
b_1(r)$. Evaluated on $r \in [0, 15]$, it generates the repetition of
the pattern $0110$. The morphism of length 16 is then given by
\begin{align*}
  \mu_3(0) &= 0110011001100110 \\
  \mu_3(1) &= 1001100110011001
\end{align*}
\end{example}

\begin{remark}
For every $m \ge 1$, the morphism $\mu_m$ is balanced in the sense of \v{C}ern\'y \cite{Cerny2013, Cerny2017}. More precisely, set $s_m = \#\{0 \le p < h : p\,\&\,(m-1) = 0\}$. The local signed polynomial
\[
\sum_{r=0}^{B-1} (-1)^{\lambda_m(r)}\, x^r \;=\; \prod_{\kappa_m(p) = 1} (1 - x^{2^p}) \prod_{\kappa_m(p) = 0} (1 + x^{2^p})
\]
factors with a zero of order $s_m$ at $x = 1$, so $\mu_m(0)$ and $\mu_m(1)$ are $s_m$-PTE equivalent in the sense of \v{C}ern\'y \cite{Cerny2013}. Iterating gives the PTE identities of Section~\ref{sec:PTE} on the blocks $[0, B^L)$ for every $L \ge 1$. The factorization argument used in Section~\ref{sec:PTE} covers in addition the blocks $[0, 2^M)$ for $M$ not a multiple of $h$.
\end{remark}

We use this morphic structure to study the factor complexity of $a_m$ in Section~\ref{sec:complexity}.

\section{Factor complexity}
\label{sec:complexity}

The factor complexity function \(p_m(n)\) counts the number of distinct
factors of length \(n\) in the word \(a_m\). This section gives an effective
description of \(p_m\) for every \(m\ge1\). The general part is the
recurrence. It reduces the computation of \(p_m\) to a finite seed. For
the primary levels, this seed has a closed form.

For \(m=1\), this is the classical complexity of the Thue-Morse word. Its
successive values are \seqnum{A005942}, studied independently by
Brlek~\cite{Brlek1989} and by de Luca and
Varricchio~\cite{deLucaVarricchio1989}.

\subsection{The recurrence}
\label{subsec:complexity-recurrence}

The main result of this subsection is the following recurrence.

\begin{theorem}
\label{thm:complexity-recurrence}
Let \(m\ge1\), and set
\[
  k=\lceil\log_2 m\rceil,\qquad B=2^{2^k}.
\]
Then, for every \(n\ge2\),
\begin{equation}
\label{eq:univ-r0}
  p_m(Bn)=p_m(n)+(B-1)p_m(n+1),
\end{equation}
and, for every \(n\ge2\) and \(1\le r\le B-1\),
\begin{equation}
\label{eq:univ-rne0}
  p_m(Bn+r)=(B+1-r)p_m(n+1)+(r-1)p_m(n+2).
\end{equation}
\end{theorem}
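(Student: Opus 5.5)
The plan is to use the morphic structure of Section~\ref{sec:morphic}: $a_m$ is the fixed point of the $B$-uniform morphism $\mu_m$, whose two images $\mu_m(0)$ and $\mu_m(1)$ are complementary words of length $B$. The recurrence should come from the standard desubstitution argument for uniform morphisms, as in Brlek and de Luca--Varricchio for Thue--Morse. We count the factors of $a_m$ of length $N = Bn+r$ by grouping them according to their \emph{ancestor} (a factor of $a_m$ of length $n+1$ or $n+2$) and their \emph{phase} (a cut point in $\{0,\dots,B-1\}$).

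\textbf{Step 1: recognizability.} First show that for $N$ large enough, which will mean $N\ge 2B$, every factor $w$ of length $N$ has a unique phase $i$ modulo $B$. This means that all occurrences of $w$ start at positions congruent to $i$ mod $B$, and that the ancestor word is determined by $w$. I would argue this from the relation $a_m(Bn+r)=a_m(n)\oplus\lambda_m(r)$. Adding $\lambda_m$ pointwise strips the block structure: $w\oplus(\lambda_m \text{ shifted by } i)$ is constant on each aligned block. So I want to show that for $j\ne i \pmod B$ the shifted pattern is not constant on some full block contained in $w$, once $w$ contains two complete blocks. This needs a property of $\lambda_m$: no nontrivial cyclic shift of $\lambda_m$ (or of its complement) agrees with $\lambda_m$ across a block boundary. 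Since $\lambda_m(r)=r\bmod 2\oplus(\text{higher selected bits})$, I expect the bit $b_0$ together with the top selected bits to rule this out. \textbf{This is the main obstacle.} Primary levels have $\lambda_m$ purely alternating, so shifts by even amounts are a genuine threat. There I would use the block-boundary structure $a_m(n)\oplus\lambda_m$ and the fact that $a_m$ itself contains both $00$ and $01$ (and their complements) as factors, which forces a phase jump detectable across a boundary. If uniqueness fails for small $N$, this is exactly why the statement requires $n\ge2$.

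\textbf{Step 2: counting by phase.} With unique phase, a factor of length $Bn+r$ with phase $i$ is the word $\mu_m(x)$ with $i$ letters removed at the left and $B-i-r$ letters removed at the right (taking the appropriate sign when $r>0$). Here $x$ is a factor of length $n+1$ or $n+2$, depending on whether $i+r\le B$. Injectivity in $x$ follows from Step 1 together with the fact that $\mu_m$ is injective letterwise, since $\mu_m(0)\ne\mu_m(1)$. When part of a block is cut, the truncated block still determines its letter because $\lambda_m$ differs from its complement at every position.

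For $r\ge1$, the phases $i$ with $i+r\le B$ contribute $p_m(n+1)$ each. Phase $0$ needs a little care, but it also gives a length-$(n+1)$ ancestor. The remaining $r-1$ phases require $n+2$ ancestor letters and give $p_m(n+2)$ each. This yields $(B+1-r)p_m(n+1)+(r-1)p_m(n+2)$.

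For $r=0$, phase $0$ gives exactly $\mu_m(x)$ with $|x|=n$, contributing $p_m(n)$. Each of the other $B-1$ phases gives $p_m(n+1)$. This yields \eqref{eq:univ-r0}.

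Finally, I would check the boundary ranges, where $n=2$ and $r$ is small, against the uniqueness bound from Step 1, and verify the seed numerically for small $m$ as a sanity check.
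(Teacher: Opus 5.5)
Your Step~2 count is correct, and desubstituting $a_m$ directly through $\mu_m$ is a legitimate alternative to the paper's route. The paper instead passes to the derivative $\Delta_m$, proves $p_m(n)=2q_m(n-1)$ by closure under complementation, and desubstitutes $\Delta_m$, whose blocks have the fixed interior $\theta_m$ and a single variable last letter. But recognizability of $a_m$ at length $2B$ is what the paper's synchronization of $\Delta_m$ at delay $2B-1$ provides, and that synchronization is the real content of the theorem. Your Step~1 leaves it unproved, and the mechanism you propose cannot work.

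The property you want from $\lambda_m$ fails for every $m$, not only at primary levels. With $\ell_m=\max R_m$, no selected bit lies in $(\ell_m,h)$. So adding $2^{\ell_m}$ modulo $B$ flips exactly one selected bit, and $\lambda_m((r+2^{\ell_m})\bmod B)=\lambda_m(r)\oplus1$. Hence every shift by a multiple of $2^{\ell_m}$ sends $\lambda_m$ to $\lambda_m$ or $\overline{\lambda_m}$; for $m=3$, $\lambda_3=(0110)^4$ and all seven even shifts qualify. For such a shift, $w\oplus\lambda^{(i)}$ and $w\oplus\lambda^{(i+\delta)}$ differ by a constant, so no property of the block pattern can exclude a second phase. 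Asking for two complete blocks is also too much. A factor of length $2B$ at nonzero phase contains only one, and the threshold $2B$ cannot be lowered in general (in Thue--Morse, $010$ occurs at positions $3$ and $10$). An argument needing two full blocks would therefore lose the cases $n=2$, $r\le B-2$.

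The missing ingredient is a property of the ancestor, not of $\lambda_m$: $a_m$ contains neither $000$ nor $111$, because $a_m(2n+1)=1-a_m(2n)$. This is Lemma~\ref{lem:no-00} read on $a_m$; the fact that $a_m$ contains $00$ and $01$ is not what is needed. With it, Step~1 closes as follows. Suppose $w$ of length $2B$ occurs at phases $i\ne j$, with $i\ne0$, so $w$ spans three ancestor letters $x_0x_1x_2$ at phase $i$.

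\textbf{Case $\delta=j-i$ a multiple of $2^{\ell_m}$.} The pattern difference is constant, and the interleaving of the two sets of block boundaries forces $x_0=x_1=x_2$, a contradiction.

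\textbf{Otherwise.} Then $\ell_m\ge1$, and the map $s\mapsto\lambda_m(i+s)\oplus\lambda_m(j+s)$ is nonconstant. If it were constant, the cyclic derivative $\lambda_m(r)\oplus\lambda_m(r+1)$ would be invariant under translation by $\delta$ and by $2^{\ell_m+1}$, hence by $2^{\nu_2(\delta)}\le 2^{\ell_m-1}$. That contradicts the fact that its values at $r=2^{\ell_m}-1$ and $r=2^{\ell_m-1}-1$ are $\gamma_m(\ell_m)\ne\gamma_m(\ell_m-1)$. Its period divides $2^{\ell_m+1}\le B/2$, so it changes value at least six times in the window. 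Yet it must equal the XOR of the two ancestor labelings, which can change only at the at most four block boundaries.

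Without an argument of this kind, the proposal does not prove the theorem.
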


For example, when \(m=1\), we have \(B=2\), and the recurrence becomes
Brlek's recurrence for the Thue-Morse word:
\[
  p_1(2n)=p_1(n)+p_1(n+1),\qquad
  p_1(2n+1)=2p_1(n+1).
\]
When \(m=2\), we have \(B=4\), and the recurrence is
\begin{align*}
  p_2(4n) &= p_2(n) + 3p_2(n+1), \\
  p_2(4n+1) &= 4p_2(n+1), \\
  p_2(4n+2) &= 3p_2(n+1) + p_2(n+2), \\
  p_2(4n+3) &= 2p_2(n+1) + 2p_2(n+2).
\end{align*}

The proof uses the derivative word
\[
  \Delta_m(n)=a_m(n)\oplus a_m(n+1).
\]
First, Lemma~\ref{lem:complexity-derivative-general} gives
\[
  p_m(n)=2q_m(n-1),
\]
where \(q_m\) is the factor complexity of \(\Delta_m\). Then a
synchronization property for \(\Delta_m\) gives the recurrence.

\subsubsection{Derivative and reduction}

\label{subsec:reduction-generale}

We show here that the validity of the system \eqref{eq:univ-r0}-\eqref{eq:univ-rne0} for a level $m$ reduces to a phase synchronization property of the derivative sequence.

Fix $m \ge 1$ with $M = m-1$, $k = \lceil \log_2 m \rceil$, $h = 2^k$ and $B = 2^h$. The condition $p\,\&\,M=0$ depends only on $p \bmod h$. Define the set of positions read in the first block of size $h$ by
\[
R_m = \{0 \le s < h : s\,\&\,M = 0\}.
\]
For every remainder $0 \le r < B$, set $\lambda_m(r) = \bigoplus_{s \in R_m} b_s(r)$. The Euclidean decomposition $N = Bn+r = 2^h n + r$ then yields
\[
a_m(Bn+r) = a_m(n) \oplus \lambda_m(r).
\]
We introduce the derivative sequence $\Delta_m(n) = a_m(n) \oplus a_m(n+1)$, which has an explicit formula.

\begin{lemma}\label{lem:derivative-formula}
The derivative sequence satisfies $\Delta_m(n) = \gamma_m(\nu_2(n+1))$,
where $\nu_2$ denotes the $2$-adic valuation and
\[
\gamma_m(t) = \#\{0 \le s \le t : s\,\&\,M = 0\} \pmod 2.
\]
\end{lemma}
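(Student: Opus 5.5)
The plan is to compute $a_m(n)\oplus a_m(n+1)$ directly from the binary digit formula of Theorem~\ref{thm:masque-binaire}, by identifying exactly which binary digits change when one passes from $n$ to $n+1$. Write $t=\nu_2(n+1)$. Then the binary expansion of $n$ ends with a $0$ at position $t$ followed by $t$ ones at positions $0,\dots,t-1$. Adding $1$ flips precisely the bits at positions $0,1,\dots,t$ and leaves all bits at positions $p>t$ unchanged. In other words, $b_p(n)\oplus b_p(n+1)=1$ for $0\le p\le t$ and $=0$ for $p>t$. This is the standard carry-propagation description of incrementing, and I would state it with a one-line justification: $n+1-n=1=2^t-(2^t-1)$.

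Next, apply the binary digit formula to both $n$ and $n+1$ and XOR the two expressions term by term over the same index set $\{p : p\,\&\,M=0\}$, with $M=m-1$. Because the index set does not depend on the argument, this gives
\[
  \Delta_m(n)=\bigoplus_{\substack{p\ge0\\ p\,\&\,M=0}} \bigl(b_p(n)\oplus b_p(n+1)\bigr)=\bigoplus_{\substack{0\le p\le t\\ p\,\&\,M=0}} 1 .
\]
The right-hand side is the parity of $\#\{0\le s\le t : s\,\&\,M=0\}$, which is exactly $\gamma_m(t)$. Both XORs are finite, since only finitely many bits of $n$ and $n+1$ are nonzero, so exchanging the order of the XORs is harmless.

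There is no real obstacle here. The only point needing care is the description of which bits flip under $n\mapsto n+1$, including the edge case $t=0$ (for $n$ even, only $b_0$ flips). Since $0\,\&\,M=0$, this case gives $\Delta_m(n)=1$, which is consistent with Lemma~\ref{lem:pairing-uv}. I would also remark that $\gamma_m(t)$ depends on $t$ only through the periodic pattern of the condition $s\,\&\,M=0$ modulo $h$. This observation is not needed for the lemma but will be useful for the synchronization argument that follows.
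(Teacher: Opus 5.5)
Your proof is correct and follows the same argument as the paper. With $t=\nu_2(n+1)$, incrementing flips exactly the bits at positions $0,\dots,t$, so when you XOR the binary digit formulas for $n$ and $n+1$, only the positions $s\le t$ with $s\,\&\,M=0$ survive, and their count gives the parity $\gamma_m(t)$.
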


\begin{proof}
Set $t = \nu_2(n+1)$. Then the bits $0, \dots, t-1$ of $n$ are equal to $1$, while bit $t$ is $0$. Adding $1$, the bits $0, \dots, t$ are exactly the bits that change.

By the binary digit formula,
\[
  a_m(n) = \bigoplus_{\substack{s \ge 0 \\ s\,\&\,M = 0}} b_s(n).
\]
In the XOR $a_m(n) \oplus a_m(n+1)$, all bits that do not change cancel. What remains is exactly the parity of the number of positions $s \in \{0, \dots, t\}$ such that $s\,\&\,M = 0$. We thus obtain
\[
  \Delta_m(n) = \gamma_m(t) = \gamma_m(\nu_2(n+1)).
\]
\end{proof}

We isolate the block structure of $\Delta_m$ in the following lemma.

\begin{lemma}
\label{lem:block-form-derivative}
Let $m \ge 1$, $M = m - 1$, $k = \lceil \log_2 m \rceil$, $h = 2^k$, $B = 2^h$, and
\[
  R_m = \{0 \le s < h : s \,\&\, M = 0\}.
\]
Define
\[
  \theta_m(r) = \gamma_m(\nu_2(r + 1)) \qquad (0 \le r \le B - 2),
\]
and
\[
  \eta_m = \#R_m \pmod 2.
\]
Then, for every $q \ge 0$,
\[
  \Delta_m(qB + r) = \theta_m(r) \qquad (0 \le r \le B - 2),
\]
and
\[
  \Delta_m(qB + B - 1) = \eta_m \oplus \Delta_m(q).
\]
\end{lemma}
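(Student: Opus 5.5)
The plan is to derive both identities directly from Lemma~\ref{lem:derivative-formula}, which gives $\Delta_m(N)=\gamma_m(\nu_2(N+1))$ for every $N\ge 0$. Throughout we use the fact that $\gamma_m(t)$ depends only on the set of $s\le t$ with $s\,\&\,M=0$. Since $M<h$, this condition depends only on $s \bmod h$.

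\emph{First identity.} Take $0\le r\le B-2$ and set $N=qB+r$. Then $N+1=qB+(r+1)$ with $1\le r+1\le B-1$. Because $B=2^h$ and $r+1$ is a nonzero residue modulo $2^h$, the $2$-adic valuation $\nu_2(qB+r+1)$ equals $\nu_2(r+1)<h$. The low $h$ bits of $N+1$ are exactly those of $r+1$, and the lowest set bit lies among them. Hence $\Delta_m(qB+r)=\gamma_m(\nu_2(r+1))=\theta_m(r)$, independently of $q$.

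\emph{Second identity.} Take $N=qB+B-1$, so $N+1=(q+1)2^h$ and $\nu_2(N+1)=h+\nu_2(q+1)$. Set $t=\nu_2(q+1)$. We split the count defining $\gamma_m(h+t)$ into the indices $s<h$ and the indices $s=h+s'$ with $0\le s'\le t$. The first range contributes $\#R_m$. For the second range, we use that $h$ is a power of two exceeding $M$, so $(h+s')\,\&\,M=s'\,\&\,M$. Thus the second range contributes $\#\{0\le s'\le t: s'\,\&\,M=0\}$, which is $\gamma_m(t)$ modulo $2$. This gives $\gamma_m(h+t)=\eta_m\oplus\gamma_m(\nu_2(q+1))$. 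Applying Lemma~\ref{lem:derivative-formula} once more, $\gamma_m(\nu_2(q+1))=\Delta_m(q)$, which is the claim.

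\emph{Main obstacle.} The only delicate point is the identity $(h+s')\,\&\,M=s'\,\&\,M$. It holds for every $s'\ge0$, since bit $k$ of $h+s'$ may interact with higher bits of $s'$ but $M<2^k$ has no bits at positions $\ge k$. A cleaner way to argue it is that $s\,\&\,M$ depends only on $s\bmod 2^k$. For $m=1$ we have $M=0$, $h=1$, and the argument goes through unchanged.
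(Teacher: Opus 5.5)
Your proof is correct and follows the paper's argument essentially step for step. For the internal positions you use $\nu_2(qB+r+1)=\nu_2(r+1)$; for the last position you split the count defining $\gamma_m(h+\nu_2(q+1))$ into $s<h$, which gives $\#R_m$, and $s\ge h$, which you shift to $s'=s-h$ using $M<2^k$. Your justification that $(h+s')\,\&\,M=s'\,\&\,M$, because adding $h$ leaves the low $k$ bits unchanged, is a slightly cleaner version of the paper's remark that the high bit does not interact with $M$.
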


\begin{proof}
For $0 \le r \le B - 2$, we have $r + 1 \le B - 1 < 2^h$, hence $\nu_2(qB + r + 1) = \nu_2(r + 1)$. Lemma~\ref{lem:derivative-formula} gives
\[
  \Delta_m(qB + r) = \gamma_m(\nu_2(r + 1)) = \theta_m(r).
\]
For $r = B - 1$, we have $qB + B = (q + 1) B$, so $\nu_2(qB + B) = h + \nu_2(q + 1)$. Hence
\[
  \Delta_m(qB + B - 1) = \gamma_m(h + \nu_2(q + 1)).
\]
The set $\{0 \le s \le h + \nu_2(q + 1) : s \,\&\, M = 0\}$ splits into the part with $s < h$ and the part with $s \ge h$. The first part has cardinality $\#R_m$ modulo $2$, equal to $\eta_m$. For the second part, since $M < h = 2^k$, the high bit $h$ does not interact with the bits of $M$, so $s \,\&\, M = 0$ if and only if $(s - h) \,\&\, M = 0$. The second part is therefore in bijection with $\{0 \le s' \le \nu_2(q + 1) : s' \,\&\, M = 0\}$, of cardinality $\gamma_m(\nu_2(q + 1)) = \Delta_m(q) \pmod 2$. Adding the two parities,
\[
  \Delta_m(qB + B - 1) = \eta_m \oplus \Delta_m(q). \qedhere
\]
\end{proof}

The derivative formula immediately implies the absence of the factor $00$ in $\Delta_m$.

\begin{lemma}
\label{lem:no-00}
For every $m \ge 1$, the sequence $\Delta_m$ does not contain the factor $00$.
\end{lemma}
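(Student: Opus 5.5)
We will show directly from Lemma~\ref{lem:derivative-formula} that two consecutive terms of $\Delta_m$ can never both vanish. The idea is that among any two consecutive integers $n+1$ and $n+2$, one is odd. Its $2$-adic valuation is therefore $0$, and the value of $\gamma_m$ at $0$ is always $1$.

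First we compute $\gamma_m(0)$. Since $0\,\&\,M=0$ for every $M$, the set $\{0\le s\le 0 : s\,\&\,M=0\}$ is $\{0\}$. Hence $\gamma_m(0)=1$.

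Now take any $n\ge0$ and look at the factor $\Delta_m(n)\Delta_m(n+1)$. One of $n+1$ and $n+2$ is odd; call it $n'+1$, where $n'\in\{n,n+1\}$. Then $\nu_2(n'+1)=0$, and Lemma~\ref{lem:derivative-formula} gives
\[
  \Delta_m(n')=\gamma_m(0)=1 .
\]
So at least one letter of every length-two factor of $\Delta_m$ equals $1$, and $00$ does not occur.

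The argument has no real obstacle. The only point to check is that Lemma~\ref{lem:derivative-formula} holds for every $n\ge0$, including $n=0$, and it does. One can also phrase the conclusion as follows: $\Delta_m(2j)=1$ for every $j\ge0$, which already rules out $00$.
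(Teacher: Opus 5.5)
Your proposal is correct and follows essentially the same route as the paper. The paper also uses $\nu_2(2n+1)=0$ and Lemma~\ref{lem:derivative-formula} to get $\Delta_m(2n)=\gamma_m(0)=1$, then notes that any two consecutive positions include an even one.
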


\begin{proof}
For every $n \ge 0$, we have
\[
  \nu_2(2n + 1) = 0.
\]
By Lemma~\ref{lem:derivative-formula},
\[
  \Delta_m(2n) = \gamma_m(0) = 1.
\]
Among two consecutive positions, one is even. It therefore carries the letter $1$. The factor $00$ cannot appear in $\Delta_m$.
\end{proof}

The complexity $p_m$ reduces to that of $\Delta_m$ by the following relation.

\begin{lemma}
\label{lem:complexity-derivative-general}
Let $q_m(L) = p_{\Delta_m}(L)$ be the factor complexity of $\Delta_m$, with the convention $q_m(0)=1$. Then, for every $n \ge 1$ and every $m \ge 1$,
\[
        p_m(n) = 2q_m(n-1).
\]
\end{lemma}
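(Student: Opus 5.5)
The plan is to use the standard correspondence between factors of a binary word and factors of its derivative. Consider the map $\Phi$ sending a factor $w = w_0 w_1 \cdots w_{n-1}$ of $a_m$ of length $n \ge 1$ to the word $\delta(w) = (w_0\oplus w_1)(w_1 \oplus w_2)\cdots(w_{n-2}\oplus w_{n-1})$ of length $n-1$. If $w$ occurs at position $i$ in $a_m$, then $\delta(w)$ is the factor of $\Delta_m$ of length $n-1$ occurring at position $i$. Conversely, any factor $d$ of $\Delta_m$ of length $n-1$, occurring at position $i$, is the image of the factor of $a_m$ of length $n$ at position $i$. Hence $\Phi$ maps the set of length-$n$ factors of $a_m$ onto the set of length-$(n-1)$ factors of $\Delta_m$. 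For $n=1$ this reads with the convention $q_m(0)=1$, the empty word.

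Next, each fiber of $\Phi$ has exactly two elements. A word $w$ is determined by its first letter $w_0$ together with $\delta(w)$. So the fiber over $d$ consists of at most the two words $x\,(x\oplus d_0)\cdots$ for $x\in\{0,1\}$, which are complements of each other. It therefore suffices to prove that the set of factors of $a_m$ is closed under complementation. Then, whenever one lift of $d$ is a factor, so is the other, and we get $p_m(n)=2q_m(n-1)$.

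The closure under complementation is the only real step, and I would take it from the morphic structure. Write $a_m(Bn+r)=a_m(n)\oplus\lambda_m(r)$ with $B=2^{h}$. Let $w$ be a factor at position $i$, and pick $L$ with $i+n\le B^L$. Iterating the block relation $L$ times gives
\[
a_m(B^L j + s)=a_m(j)\oplus a_m(s)\qquad (0\le s<B^L).
\]
This is also immediate from the binary digit formula of Theorem~\ref{thm:masque-binaire}, because the condition $p\,\&\,(m-1)=0$ is $h$-periodic in $p$ and $B^L=2^{hL}$. Now choose $j$ with $a_m(j)=1$, for instance $j=1$, since $a_m(1)=1$. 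The factor at position $B^L+i$ is then the complement of $w$, because $s=i,\dots,i+n-1$ all stay below $B^L$.

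The main care point is only bookkeeping: handling $n=1$ through the convention, and making sure the shifted window $[i,i+n)$ stays inside one block of size $B^L$, which the choice of $L$ guarantees.
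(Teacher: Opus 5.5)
Your proposal is correct and follows essentially the same route as the paper: the derivative map sends length-$n$ factors of $a_m$ onto length-$(n-1)$ factors of $\Delta_m$, each factor has exactly two complementary lifts, and both lifts occur because the factor set of $a_m$ is closed under complementation. For that closure, the paper shifts the window by $2^{J}$ with $J=2^e>m-1$, while you shift by $B^L=2^{hL}$ using the block relation. In both cases the shift is a large power of two whose exponent $E$ satisfies $E\,\&\,(m-1)=0$, so it is the same idea.
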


\begin{proof}
To every factor
\[
  u = u_0 u_1 \cdots u_{n-1},
\]
of $a_m$, we associate its derived word
\[
  D(u) = (u_0 \oplus u_1) \cdots (u_{n-2} \oplus u_{n-1}),
\]
which is a factor of length $n-1$ of $\Delta_m$. Conversely, every factor of length $n-1$ of $\Delta_m$ appears as the derivative of the factor of length $n$ of $a_m$ located at the same positions.

A binary word of length $n-1$ has exactly two binary preimages of length $n$ under the map $D$. These two preimages are complementary to each other. We verify that the set of factors of $a_m$ is stable under complementation.

Let
\[
  u = a_m(t) \cdots a_m(t+n-1),
\]
be a factor of $a_m$. Since $M$ is fixed, choose an integer $e$ so large that $J = 2^e$ satisfies $J > M$ and $2^J > t + n$. The binary representation of $J$ consists of a single $1$-bit at position $e$, which is strictly higher than the position of any $1$-bit of $M$. Hence $J \,\&\, M = 0$. For $0 \le j < n$, the addition of $2^J$ to $t + j$ produces no carry in positions below $J$, and adds exactly one digit $1$ at position $J$. This bit contributes to the formula, since $J \,\&\, M = 0$. Thus
\[
  a_m(t + j + 2^J) = a_m(t + j) \oplus 1.
\]
The complementary factor $\overline{u}$ therefore appears at the positions shifted by $2^J$. The two preimages of each factor of $\Delta_m$ are thus factors of $a_m$, and the relation
\[
  p_m(n) = 2 q_m(n-1),
\]
follows.
\end{proof}

The recurrence~\eqref{eq:univ-r0}-\eqref{eq:univ-rne0} follows if the sequence $\Delta_m$ satisfies the following synchronization property.

\begin{definition}
Let $m \ge 1$, let $\Delta_m$ be the derivative of $a_m$, and set $k = \lceil \log_2 m \rceil$, $B = 2^{2^k}$. We say that $\Delta_m$ \emph{synchronizes at delay} $2B - 1$ if every factor of $\Delta_m$ of length at least $2B - 1$ determines uniquely its starting position modulo $B$.
\end{definition}

The word ``synchronization'' here means that long enough factors determine their starting phase modulo $B$. It is not the notion of $k$-synchronized function in the sense of Carpi.

Under the synchronization hypothesis, the desubstitution operation is well defined for sufficiently long factors.

\begin{lemma}
\label{lem:desubstitution-generale}
Suppose that $\Delta_m$ synchronizes at delay $2B-1$. For every $L \ge 2B-1$, writing $L = aB+r$ with $0 \le r < B$, we have
\[
q_m(L) = (B-r)q_m(a) + r q_m(a+1).
\]
\end{lemma}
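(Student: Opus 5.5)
The plan is the standard desubstitution count used for Brlek's recurrence. I will count factors of length $L = aB + r$ of $\Delta_m$ by their starting phase modulo $B$. By Lemma~\ref{lem:block-form-derivative}, $\Delta_m$ is the image of itself under the map that sends a letter $x$ to the block
\[
  \theta_m(0)\cdots\theta_m(B-2)\,(\eta_m\oplus x).
\]
This block map is injective on letters. Hence a factor $w$ starting at position $qB + s$, with $0 \le s < B$, is determined by the pair $(s, z)$, where $z$ is the factor of $\Delta_m$ formed by the letters $\Delta_m(q), \Delta_m(q+1), \dots$ whose blocks meet the window $[qB+s, qB+s+L)$. Each such letter of $z$ is read off from the last position of its block. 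Conversely, every pair $(s, z)$ with $z$ a factor of the right length is realized, since $z$ occurs somewhere in $\Delta_m$.

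The first step is to fix the length of $z$. The window covers the positions $qB + s, \dots, qB + s + L - 1$. The last positions of blocks inside it are the numbers $jB + B - 1$ with $j$ ranging over an interval. Counting them, the window meets exactly $a+1$ block-final positions when $s \le B - 1 - r$, and $a+1$ or $a$ of them in the remaining cases. I will do this count carefully. I expect it to give $a$ block-final positions for exactly $B - r$ phases $s$ and $a+1$ for the other $r$ phases. If instead the natural count lands on length $a+1$ versus $a+2$, I will re-index by recording only the letters whose block-final position actually lies in the window. That choice makes the map $w \mapsto (s,z)$ injective with $z$ of length $a$ or $a+1$.

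The second step is injectivity and surjectivity of $w \mapsto (s, z)$. Given $s$ and $z$, the word $w$ is reconstructed: its positions that are not block-final carry the fixed pattern $\theta_m$, and its block-final positions carry $\eta_m \oplus z_i$. Conversely, the synchronization hypothesis (valid because $L \ge 2B-1$) says that $w$ determines $s$, and $s$ then determines which positions of $w$ are block-final, hence $z$. Distinct pairs therefore give distinct factors. Summing over the phases gives
\[
  q_m(L) = (B - r)\,q_m(a) + r\,q_m(a+1).
\]

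The main obstacle is the first step: getting the number of block-final positions in the window exactly right as a function of $s$ and $r$, including the case $r = 0$, where all $B$ phases must give length $a$. If the boundary count comes out differently, I will adjust which partially covered block contributes a letter to $z$, so that the two lengths occur with multiplicities $B - r$ and $r$.
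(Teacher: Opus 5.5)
Your strategy is the same as the paper's. You split the factors of length $L$ by starting phase $s$ modulo $B$, use synchronization to make the phase classes disjoint, and put each phase-$s$ class in bijection with factors of $\Delta_m$ of some length $t_s$ by reading the block-final letters XOR $\eta_m$. However, the step you left open is the whole quantitative content of the lemma, and your provisional version of it is wrong in two ways.

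First, $z$ must be defined from the outset as the letters at block-final positions \emph{inside} the window, not the letters of all blocks that meet the window. The latter gives $\lceil (s+L)/B\rceil$ letters, up to $a+2$. When the last, partially covered block has its final position outside the window, $w$ does not determine that block's letter. Then $w\mapsto(s,z)$ is not well defined, and summing over phases would overcount. Your fallback (record only block-final positions in the window) is the correct choice, and it is the paper's.

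Second, your preliminary count, ``exactly $a+1$ when $s\le B-1-r$'', is inverted. The computation takes one line. The block-final positions in $[qB+s,\,qB+s+L)$ are $(q+j)B+B-1$ with $s\le jB+B-1\le s+L-1$. Since $0\le s\le B-1$, this means $0\le j\le \lfloor (s+L)/B\rfloor-1$, so
\[
  t_s=\left\lfloor\frac{s+L}{B}\right\rfloor=a+\left\lfloor\frac{s+r}{B}\right\rfloor .
\]
This equals $a$ for the $B-r$ phases $0\le s\le B-1-r$ and $a+1$ for the $r$ phases $B-r\le s\le B-1$, because $s+r\le 2B-2$. For $r=0$ every phase gives $a$.

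With this definition, surjectivity is also clean. If $z$ occurs at position $q$ of $\Delta_m$, then by Lemma~\ref{lem:block-form-derivative} the factor of length $L$ starting at $qB+s$ has block-final letters $\eta_m\oplus z_0,\dots,\eta_m\oplus z_{t_s-1}$, and its other letters are fixed by $\theta_m$.

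Finally, drop the idea of ``adjusting which partially covered block contributes a letter'' to obtain the desired multiplicities. The multiplicities must come out of a genuine bijection, not be imposed. Here the natural bijection happens to give exactly $B-r$ and $r$, which yields $q_m(L)=(B-r)q_m(a)+r\,q_m(a+1)$.
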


\begin{proof}
For $0 \le s < B$, denote by $\mathcal{F}_s(L)$ the set of factors of $\Delta_m$ of length $L$ starting at a position of phase $s$ modulo $B$. The synchronization hypothesis at delay $2B-1$ implies that the sets $\mathcal{F}_s(L)$ are pairwise disjoint for $L \ge 2B-1$. Indeed, if the same factor belonged to two sets $\mathcal{F}_s(L)$ and $\mathcal{F}_{s'}(L)$, with $s \ne s'$, it would have two possible starting phases modulo $B$.

We recall the form of the blocks of $\Delta_m$. By Lemma~\ref{lem:block-form-derivative}, the first $B-1$ letters of a block of length $B$ are imposed by the fixed pattern $\theta_m$. The last letter of the block of index $q$ has the form
\[
  \eta_m \oplus \Delta_m(q),
\]
where $\eta_m = \#R_m \pmod 2$. Thus, the letters interior to the blocks are fixed, while the end-of-block letters are obtained from consecutive letters of $\Delta_m$, with possibly a fixed complementation.

A factor of $\mathcal{F}_s(L)$ encounters exactly
\[
  t_s = \left\lfloor \frac{L+s}{B} \right\rfloor
\]
end-of-block positions. For $L \ge 2B - 1$, we have $t_s \ge 1$ for every phase $s$, so no degenerate case arises. These positions carry, for some index $q$, the letters
\[
  \eta_m \oplus \Delta_m(q), \;
  \eta_m \oplus \Delta_m(q+1), \;
  \dots, \;
  \eta_m \oplus \Delta_m(q + t_s - 1).
\]
All other letters are fixed by the internal pattern $\theta_m$ and by the phase $s$.

We make the bijection explicit. Define $\Phi_s$ on $\mathcal{F}_s(L)$ by deleting from each factor the internal letters and keeping only the letters at end-of-block positions, after applying the fixed XOR by $\eta_m$. Then $\Phi_s$ maps $\mathcal{F}_s(L)$ onto the set of factors of $\Delta_m$ of length $t_s$. The inverse takes a factor of length $t_s$ in $\Delta_m$, places its successive letters (after XOR by $\eta_m$) at the end-of-block positions corresponding to phase $s$, and fills the remaining positions with the pattern $\theta_m$. We obtain
\[
  |\mathcal{F}_s(L)| = q_m(t_s).
\]

If $L = aB + r$, with $0 \le r < B$, then
\[
  t_s = \left\lfloor \frac{aB + r + s}{B} \right\rfloor.
\]
We thus have $t_s = a$ when $s + r < B$, and $t_s = a+1$ when $s + r \ge B$. There are $B - r$ phases in the first case and $r$ phases in the second. Since synchronization makes the phase classes disjoint, we obtain
\[
  q_m(L) = \sum_{s=0}^{B-1} |\mathcal{F}_s(L)| = (B - r) q_m(a) + r \, q_m(a+1).
\]
\end{proof}

Under the synchronization hypothesis, the system~\eqref{eq:univ-r0}-\eqref{eq:univ-rne0} then follows from this desubstitution.

\begin{proposition}
\label{prop:dnc-conditional}
Let $m \ge 1$ and $B = 2^{2^{\lceil \log_2 m \rceil}}$. If $\Delta_m$ synchronizes at delay $2B-1$, then the complexity $p_m$ satisfies the system \eqref{eq:univ-r0}-\eqref{eq:univ-rne0} for every $n \ge 2$.
\end{proposition}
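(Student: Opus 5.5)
We combine Lemma~\ref{lem:complexity-derivative-general} with Lemma~\ref{lem:desubstitution-generale}. By Lemma~\ref{lem:complexity-derivative-general}, $p_m(N)=2q_m(N-1)$ for every $N\ge1$. So it suffices to rewrite both sides of \eqref{eq:univ-r0} and \eqref{eq:univ-rne0} in terms of $q_m$, apply the desubstitution formula to the left-hand side, and check that the coefficients match.

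First take $N=Bn+r$ with $1\le r\le B-1$ and $n\ge2$. Then $N-1=Bn+(r-1)$ with $0\le r-1<B$. Since $N-1\ge 2B\ge 2B-1$, the synchronization hypothesis lets us apply Lemma~\ref{lem:desubstitution-generale} with $a=n$ and remainder $r-1$:
\[
  q_m(Bn+r-1)=(B-r+1)\,q_m(n)+(r-1)\,q_m(n+1).
\]
Multiplying by $2$ and using $2q_m(n)=p_m(n+1)$ and $2q_m(n+1)=p_m(n+2)$, we obtain exactly \eqref{eq:univ-rne0}.

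Next take $N=Bn$ with $n\ge2$. Here $N-1=B(n-1)+(B-1)$, so we apply the lemma with $a=n-1$ and remainder $B-1$. This is legitimate because $N-1=2B-1+B(n-2)\ge 2B-1$, and this case is where the restriction $n\ge2$ is used. The lemma gives
\[
  q_m(Bn-1)=q_m(n-1)+(B-1)\,q_m(n).
\]
Multiplying by $2$ turns this into $p_m(Bn)=p_m(n)+(B-1)p_m(n+1)$, which is \eqref{eq:univ-r0}. We need $n-1\ge1$ so that $2q_m(n-1)=p_m(n)$ is covered by Lemma~\ref{lem:complexity-derivative-general}, and $n\ge2$ guarantees this.

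The argument is essentially bookkeeping, so there is no substantial obstacle. The only delicate step is the index shift from $N$ to $N-1$: one must check that in every case the length fed to Lemma~\ref{lem:desubstitution-generale} is at least $2B-1$, and that the remainder lies in $[0,B)$. This is why the cases $r=0$ and $r\ge1$ must be treated separately, with the $r=0$ case borrowing from the previous block.
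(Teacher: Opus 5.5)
Your proof is correct and matches the paper's argument step for step: you rewrite via $p_m(N)=2q_m(N-1)$, apply Lemma~\ref{lem:desubstitution-generale} to $Bn-1=(n-1)B+(B-1)$ and to $Bn+r-1=nB+(r-1)$, and convert back. One small remark: Lemma~\ref{lem:complexity-derivative-general} already gives $p_m(n)=2q_m(n-1)$ for every $n\ge1$, so the hypothesis $n\ge2$ is needed only for the length bound $Bn-1\ge 2B-1$.
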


\begin{proof}
We use the relation
\[
  p_m(n) = 2 q_m(n-1),
\]
from Lemma~\ref{lem:complexity-derivative-general}.

For $r = 0$, we evaluate
\[
  p_m(Bn) = 2 q_m(Bn - 1).
\]
Since $n \ge 2$, we have $Bn - 1 \ge 2B - 1$. We can therefore apply Lemma~\ref{lem:desubstitution-generale}. The Euclidean decomposition is
\[
  Bn - 1 = (n-1) B + (B - 1).
\]
Thus
\[
  q_m(Bn - 1) = q_m(n - 1) + (B - 1) q_m(n).
\]
Multiplying by $2$, then using again $p_m(t) = 2 q_m(t-1)$, we obtain
\[
  p_m(Bn) = p_m(n) + (B - 1) p_m(n + 1),
\]
which gives \eqref{eq:univ-r0}.

Now let $1 \le r \le B - 1$. We evaluate
\[
  p_m(Bn + r) = 2 q_m(Bn + r - 1).
\]
Since $n \ge 2$, we have $Bn + r - 1 \ge 2B$, so the condition $L \ge 2B - 1$ is satisfied. The Euclidean decomposition is
\[
  Bn + r - 1 = nB + (r - 1).
\]
Lemma~\ref{lem:desubstitution-generale} gives
\[
  q_m(Bn + r - 1) = (B - (r - 1)) q_m(n) + (r - 1) q_m(n + 1).
\]
In other words,
\[
  q_m(Bn + r - 1) = (B + 1 - r) q_m(n) + (r - 1) q_m(n + 1).
\]
Multiplying by $2$, we obtain
\[
  p_m(Bn + r) = (B + 1 - r) p_m(n + 1) + (r - 1) p_m(n + 2),
\]
which gives \eqref{eq:univ-rne0}.
\end{proof}

\subsubsection{Synchronization for \texorpdfstring{$m = 2^k$}{m = powers of two}}

Consider the case $m = 2^k$ for $k \ge 0$. Here $M = 2^k-1$, $h = 2^k$ and $B = 2^h$. The condition imposes $s\,\&\,(2^k-1) = 0$, so only positions $s$ that are multiples of $h$ are read. The derivative $\Delta_m$ has a simpler form, given by the following lemma.

\begin{lemma}
\label{lem:derived-morphism-primary}
Let $m = 2^k$ with $k \ge 0$, and let $B = 2^{2^k}$. The sequence $\Delta_m$ is the fixed point starting with $1$ of the $B$-uniform morphism $\Theta_m$ defined on $\{0,1\}$ by
\[
  \Theta_m(c) = \theta_m \cdot (\eta_m \oplus c),
\]
with $\theta_m = 1^{B-1}$ and $\eta_m = 1$. Equivalently,
\[
  \Theta_m(1) = 1^{B-1} 0, \qquad \Theta_m(0) = 1^B.
\]
\end{lemma}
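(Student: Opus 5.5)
The plan is to specialize Lemma~\ref{lem:block-form-derivative} to $m=2^k$. First, compute $\gamma_m$ and $\eta_m$. Here $M=2^k-1$ and $h=2^k$, so the condition $s\,\&\,M=0$ holds exactly when $h$ divides $s$. Hence $R_m=\{0\}$ and $\eta_m=\#R_m \bmod 2 = 1$. For every $t$ with $0\le t<h$, the set $\{0\le s\le t : s\,\&\,M=0\}$ reduces to $\{0\}$, so $\gamma_m(t)=1$.

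Next, show that the interior pattern is constant. For $0\le r\le B-2$ we have $1\le r+1\le B-1<2^h$, so $\nu_2(r+1)<h$. Therefore $\theta_m(r)=\gamma_m(\nu_2(r+1))=1$, that is, $\theta_m=1^{B-1}$. Lemma~\ref{lem:block-form-derivative} then gives, for every $q\ge0$,
\[
  \Delta_m(qB+r)=1 \quad (0\le r\le B-2), \qquad \Delta_m(qB+B-1)=1\oplus\Delta_m(q).
\]
This says precisely that the block of index $q$ of $\Delta_m$ equals $\Theta_m(\Delta_m(q))$. In other words, $\Delta_m=\Theta_m(\Delta_m)$ as infinite words, where the image of a word under $\Theta_m$ is the concatenation of the images of its letters.

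It remains to identify $\Delta_m$ as the fixed point starting with $1$. By Lemma~\ref{lem:no-00}, or directly from $\gamma_m(0)=1$, we have $\Delta_m(0)=1$. The morphism $\Theta_m$ is prolongable on $1$: since $B\ge2$, $\Theta_m(1)=1^{B-1}0$ begins with $1$ and has length at least $2$. So $\lim_t\Theta_m^t(1)$ is its unique fixed point beginning with $1$. Any fixed point $w$ starting with $1$ has $\Theta_m^t(1)$ as a prefix for all $t$, by induction from $w=\Theta_m^t(w)$, so it coincides with this limit. Applying this to $w=\Delta_m$ finishes the proof. The displayed equivalent forms are immediate: $\Theta_m(1)=1^{B-1}(1\oplus1)=1^{B-1}0$ and $\Theta_m(0)=1^{B-1}1=1^B$.

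There is no real obstacle here. The only point needing care is the bound $\nu_2(r+1)<h$, which guarantees that interior positions never reach the first multiple of $h$ other than $0$. The case $k=0$ is consistent with the others: there $B=2$, and we recover the period-doubling-type morphism $1\mapsto10$, $0\mapsto11$.
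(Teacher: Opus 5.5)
Your proof is correct and follows essentially the paper's route. The paper recomputes the interior letters and the end-of-block relation $\gamma_m(h+t)=1\oplus\gamma_m(t)$ directly from Lemma~\ref{lem:derivative-formula}, whereas you get both at once by specializing Lemma~\ref{lem:block-form-derivative} with $R_m=\{0\}$ (so $\eta_m=1$ and $\theta_m=1^{B-1}$), and your prolongability and uniqueness argument supplies a step the paper leaves implicit.
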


\begin{proof}
We use Lemma~\ref{lem:derivative-formula}. Here
\[
  M = 2^k - 1, \qquad h = 2^k, \qquad B = 2^h.
\]
The condition $s \,\&\, M = 0$ is equivalent to saying that $s$ is a multiple of $h$. Thus
\[
  \gamma_m(t) = \#\{0 \le s \le t : h \mid s\} \pmod 2.
\]

For $0 \le j \le B - 2$, we have $j + 1 < B = 2^h$, so
\[
  \nu_2(j + 1) < h.
\]
The only multiple of $h$ in the interval $[0, \nu_2(j+1)]$ is then $0$. Consequently
\[
  \Delta_m(qB + j) = 1 \qquad (0 \le j \le B - 2).
\]
The internal pattern is therefore $\theta_m = 1^{B-1}$.

For the last letter of the block, take $j = B - 1$, so that we have
\[
  \nu_2(qB + B) = \nu_2(B(q + 1)) = h + \nu_2(q + 1).
\]
If $t = \nu_2(q + 1)$, the multiples of $h$ in $[0, h + t]$ are $0$, then the integers $h + s$, where $s$ ranges over the multiples of $h$ in $[0, t]$. Thus
\[
  \gamma_m(h + t) = 1 \oplus \gamma_m(t),
\]
which corresponds to $\eta_m = \gamma_m(h-1) = 1$. Hence
\[
  \Delta_m(qB + B - 1) = 1 \oplus \Delta_m(q),
\]
and the formula stated for $\Theta_m$ follows.
\end{proof}

The position of the zeros of $\Delta_m$ determines the synchronization.

\begin{lemma}
\label{lem:zero-geometry}
Let $m = 2^k$ with $k \ge 0$, and let $B = 2^{2^k}$. In $\Delta_m$, the zeros are isolated, occur only at positions congruent to $B - 1$ modulo $B$, and two consecutive zeros are separated by a distance of $B$ or $2B$. Consequently, $\Delta_m$ synchronizes at delay $2B - 1$.
\end{lemma}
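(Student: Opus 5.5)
The proof will rest on Lemma~\ref{lem:derived-morphism-primary}, which gives the block form of $\Delta_m$. For every $q \ge 0$ the block $\Delta_m(qB)\cdots\Delta_m(qB+B-1)$ equals $1^{B-1}$ followed by the letter $1\oplus\Delta_m(q)$. So all positions not congruent to $B-1$ modulo $B$ carry $1$. A zero can only occur at a position $qB+B-1$, and it does so exactly when $\Delta_m(q)=1$.

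The first step is to describe the zeros. Since all other positions carry $1$, the zeros are isolated and lie at positions $\equiv B-1 \pmod B$. Two consecutive zeros sit at positions $qB+B-1$ and $q'B+B-1$, where $q<q'$ are consecutive indices with $\Delta_m(q)=\Delta_m(q')=1$. Their distance is therefore $B(q'-q)$. By Lemma~\ref{lem:no-00}, $\Delta_m$ has no factor $00$, so between two consecutive ones there is at most one zero, and $q'-q\in\{1,2\}$. This gives distances $B$ or $2B$.

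The second step is synchronization, using the gap bound just obtained. Any window of length at least $2B-1$ must contain a zero. Indeed, a window containing no zero would lie strictly between two consecutive zeros at distance at most $2B$, so it has length at most $2B-1$. Equality is the delicate case: a window of length exactly $2B-1$ strictly between zeros at distance $2B$ is possible. One must also handle the window before the first zero, which starts at position $0$ and ends before position $B-1$ or $2B-1$.

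This boundary case is the main obstacle, and the first thing to try is to refine what ``determines'' means. Each factor of length $2B-1$ containing a zero determines its phase: if the zero appears at offset $j$ in the factor starting at $N$, then $N+j\equiv B-1$, so $N \equiv B-1-j \pmod B$. A factor $1^{2B-1}$ does occur, for example strictly between zeros at distance $2B$. However, its starting position is then the position right after a zero, so its phase is forced to be $0$. Phase $0$ is the only possible phase for $1^{2B-1}$: any start $N\not\equiv 0 \pmod B$ hits a position $\equiv B-1$ within $2B-1$ steps, and this can be checked to happen at both such positions unless one of them is not a zero.

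To make this rigorous, I would show that a run $1^{2B-1}$ starting at $N$ forces the zero slots within it to hold ones, and that two consecutive such slots with value $1$ correspond to $\Delta_m(q)=\Delta_m(q+1)=0$, contradicting Lemma~\ref{lem:no-00}. The all-ones window starting at a phase $s\ne0$ covers the two slots $qB+B-1$ and $(q+1)B+B-1$ whenever $s\ge1$, since $N+2B-2 \ge (q+1)B+B-1$. Hence every occurrence of $1^{2B-1}$ starts at phase $0$. Together with the zero-containing case, this gives synchronization at delay $2B-1$, and longer factors inherit the property.
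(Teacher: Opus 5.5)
Your proof is correct and follows the paper's argument. The block form $1^{B-1}(1\oplus\Delta_m(q))$ from Lemma~\ref{lem:derived-morphism-primary} places the zeros at phase $B-1$, Lemma~\ref{lem:no-00} gives ancestor gaps of $1$ or $2$ and hence zero gaps of $B$ or $2B$, and the only zero-free factor of length $2B-1$ must start at phase $0$. Your direct check that an all-ones window at phase $s\neq0$ covers two consecutive end-of-block slots, forcing $\Delta_m(q)=\Delta_m(q+1)=0$, is a slightly tidier version of the paper's ``between two zeros at distance $2B$'' step, since it also handles the initial prefix (which is only $1^{B-1}$ anyway) without a separate case.
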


\begin{proof}
By Lemma~\ref{lem:derived-morphism-primary}, the images $\Theta_m(0)$ and $\Theta_m(1)$ start with $1$, and the only possible zero in an image appears at the last position of
\[
  \Theta_m(1) = 1^{B-1} 0.
\]
The zeros of $\Delta_m$ are therefore all located at positions congruent to $B - 1$ modulo $B$.

By Lemma~\ref{lem:no-00}, the sequence $\Delta_m$ contains no factor $00$. Each block contains at most one zero, located at the last position, and the next block always starts with $1$, so two zeros cannot be consecutive.

The zeros of $\Delta_m$ correspond exactly to the letters $1$ of the ancestor $\Delta_m$. Two consecutive letters $1$ of the ancestor are separated by a distance $1$ or $2$, since the ancestor itself contains no factor $00$ (Lemma~\ref{lem:no-00}). After inflation by $\Theta_m$, two consecutive zeros of $\Delta_m$ are therefore separated by a distance $B$ or $2B$.

Synchronization follows from the positions of the zeros. If a factor of length $L \ge 2B - 1$ contains a zero, the position of this zero in the factor determines the starting phase modulo $B$, since all zeros are at phase $B - 1$.

If it contains no zero, then $L$ cannot exceed $2B - 1$, since two consecutive zeros are at distance at most $2B$. When $L = 2B - 1$, the factor is the full block of $1$'s located between two zeros at distance $2B$. It therefore starts just after a zero, that is, at phase $0$ modulo $B$. The phase is therefore again determined. Synchronization at delay $2B - 1$ follows.
\end{proof}

Lemma~\ref{lem:zero-geometry} gives synchronization in the primary case.

\begin{theorem}
\label{thm:dnc-primary}
For every $k \ge 0$, setting $m = 2^k$ and $B = 2^{2^k}$, the derivative
$\Delta_m$ synchronizes at delay $2B-1$.
\end{theorem}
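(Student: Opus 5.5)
The theorem is a direct consequence of Lemma~\ref{lem:zero-geometry}, so the plan is to invoke that lemma and check that its conclusion matches the definition of synchronization at delay $2B-1$. Fix $k\ge0$, $m=2^k$, $h=2^k$, $B=2^h$.

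First, by Lemma~\ref{lem:derived-morphism-primary}, $\Delta_m$ is the fixed point of $\Theta_m(1)=1^{B-1}0$, $\Theta_m(0)=1^B$. Lemma~\ref{lem:zero-geometry} then says three things: every zero of $\Delta_m$ sits at a position $\equiv B-1 \pmod B$; zeros are isolated; and consecutive zeros are at distance $B$ or $2B$. I would also note that $\Delta_m$ has at least one zero, so the pattern is not vacuous. Indeed $\Delta_m(B-1)=1\oplus\Delta_m(0)=0$, because $\Delta_m(0)=\gamma_m(0)=1$. Hence zeros occur infinitely often, with gaps bounded by $2B$.

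Now take a factor $w$ of length $L\ge 2B-1$ occurring at position $t$.

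\emph{Case 1: $w$ contains a zero.} Let its index inside $w$ be $i$. Then $t+i\equiv B-1\pmod B$, so $t\equiv B-1-i \pmod B$. This depends only on $w$, not on the occurrence, so every occurrence of $w$ has the same phase.

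\emph{Case 2: $w$ contains no zero.} Then $w=1^L$. Maximal runs of $1$'s have length $B-1$ or $2B-1$, since they lie strictly between consecutive zeros at distance $B$ or $2B$. The initial prefix run is also bounded: it has length $B-1$. So $L\le 2B-1$, forcing $L=2B-1$. Then $w$ fills a maximal run exactly, so it starts right after a zero, at phase $0$ modulo $B$.

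The only delicate point, which I expect to be the main obstacle, is this boundary situation in Case 2. One must make sure that the factor $1^{2B-1}$ never occurs in any other way, for instance as a prefix of $\Delta_m$ or straddling differently. This is settled by the observation that the first zero is at position $B-1$: the prefix run therefore has length only $B-1$, and every occurrence of $1^{2B-1}$ must be a full run of length $2B-1$ between two zeros. Combining both cases, every factor of length at least $2B-1$ determines its starting phase modulo $B$, which is the required synchronization at delay $2B-1$.
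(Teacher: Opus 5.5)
Your proposal is correct and follows the paper's route: the theorem is just Lemma~\ref{lem:zero-geometry}, whose proof uses the same two cases. A zero fixes the phase because all zeros sit at phase $B-1$, and a zero-free factor of length $2B-1$ must fill exactly a run between zeros at distance $2B$, so it starts at phase $0$. Your check that the initial run has length only $B-1$ (since $\Delta_m(B-1)=1\oplus\Delta_m(0)=0$) makes explicit a point the paper leaves implicit.
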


\begin{proof}
This is exactly Lemma~\ref{lem:zero-geometry}.
\end{proof}

\subsubsection{Synchronization in the general case}

For $m$ not a power of two, the last letter of a block equals the ancestral letter without complementation. We establish synchronization using the periodicity of the internal pattern and the absence of the factor $00$.

\begin{lemma}
\label{lem:theta-periodicity}
For every $m \ge 1$, set $M = m-1$, $k = \lceil \log_2 m \rceil$ and $h = 2^k$. Let $\ell_m = \max\{0 \le s < h : s \,\&\, M = 0\}$. The internal pattern $\theta_m$ of length $B-1$, defined by $\theta_m(r) = \gamma_m(\nu_2(r+1))$ for $0 \le r < B-1$, is periodic with period $P_m = 2^{\ell_m}$, and $P_m$ strictly divides $B = 2^h$.

Moreover, $P_m$ is the minimal period of $\theta_m$ as soon as $\ell_m > 0$.
\end{lemma}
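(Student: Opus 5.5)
The plan is to show that $\theta_m(r)$ depends only on $r+1 \bmod 2^{\ell_m}$, and then to get minimality from the single position $s=\ell_m$ of $R_m$, where $\gamma_m$ changes value.

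\textbf{Step 1: $\gamma_m$ is constant on $[\ell_m,h-1]$.} Write $\ell=\ell_m$. For $\ell\le t\le h-1$, the set $\{0\le s\le t : s\,\&\,M=0\}$ equals $R_m$, because no element of $R_m$ exceeds $\ell$. Hence $\gamma_m(t)=\#R_m \bmod 2=\eta_m$ for all such $t$. This holds for every $m\ge1$, including the degenerate case $\ell=0$.

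\textbf{Step 2: $\theta_m$ has period $2^\ell$.} For $0\le r\le B-2$ we have $1\le r+1\le 2^h-1$, so $\nu_2(r+1)\le h-1$. By Step 1,
\[
  \theta_m(r)=\gamma_m\bigl(\min(\nu_2(r+1),\ell)\bigr).
\]
The quantity $\min(\nu_2(r+1),\ell)$ depends only on $r+1 \bmod 2^\ell$. Indeed, it equals $\ell$ exactly when $2^\ell\mid r+1$, and otherwise it equals $\nu_2\bigl((r+1)\bmod 2^\ell\bigr)$. Therefore $\theta_m(r)=\theta_m(r+2^\ell)$ whenever both indices lie in $[0,B-2]$. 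Since $\ell<h<2^h$, the period $P_m=2^\ell$ is a power of two smaller than $B=2^h$, so it strictly divides $B$.

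\textbf{Step 3: minimality when $\ell>0$.} Because $\ell\in R_m$, we get $\gamma_m(\ell)=\gamma_m(\ell-1)\oplus1$. Consider the two positions $r_1=2^{\ell-1}-1$ and $r_2=2^\ell-1$. They satisfy $\nu_2(r_1+1)=\ell-1$ and $\nu_2(r_2+1)=\ell$, and both lie in $[0,B-2]$ since $\ell<h$. So $\theta_m(r_1)\ne\theta_m(r_2)$, which shows that $2^{\ell-1}$ is not a period. Consequently no $2^j$ with $j<\ell$ is a period either, since $2^{\ell-1}$ is a multiple of $2^j$.

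Now suppose $\theta_m$ had some period $p<2^\ell$. Since $2^\ell$ is also a period, the Fine--Wilf theorem applies provided the length $B-1$ is at least $p+2^\ell-\gcd(p,2^\ell)$. This holds, because $p+2^\ell-\gcd(p,2^\ell)\le 2^{\ell+1}-2\le 2^h-1$, using $\ell+1\le h$. Fine--Wilf then makes $\gcd(p,2^\ell)=2^j$, with $j<\ell$, a period. That contradicts the previous paragraph, so $2^\ell$ is the minimal period.

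The only delicate point is this minimality step. A bare period $p$ need not divide $2^\ell$, which is why the argument goes through Fine--Wilf; one could alternatively compare positions $2^\ell-1$ and $2^\ell-1+p$ directly.
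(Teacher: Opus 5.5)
Your proof is correct and follows the paper's argument essentially step for step. It uses the constancy of $\gamma_m$ on $[\ell_m,h-1]$, periodicity via the $2$-adic valuation of $r+1$, and the positions $2^{\ell-1}-1$ and $2^{\ell}-1$ to rule out $2^{\ell-1}$, then Fine--Wilf to rule out periods not dividing $2^{\ell}$. Your closing aside should be dropped, though: comparing positions $2^\ell-1$ and $2^\ell-1+p$ alone does not always refute a period $p$ (for $m=5$ one has $\ell_5=3$ and $\theta_5(7)=\gamma_5(3)=0=\gamma_5(1)=\theta_5(9)$, so $p=2$ passes that test), which is exactly why the Fine--Wilf step is needed.
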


\begin{proof}
Set $M = m - 1$. For $0 \le r \le B - 2$, we have
\[
  r + 1 < B = 2^h,
\]
so
\[
  \nu_2(r + 1) < h.
\]
Thus, in the internal pattern, the value
\[
  \theta_m(r) = \gamma_m(\nu_2(r + 1)),
\]
involves only the integers $s \in [0, h - 1]$ such that $s \,\&\, M = 0$.

By definition of $\ell_m$, no integer
\[
  s \in (\ell_m, h - 1],
\]
satisfies $s \,\&\, M = 0$. The function $\gamma_m$ is therefore constant on the interval $[\ell_m, h - 1]$.

Set
\[
  P_m = 2^{\ell_m}.
\]
We prove that $P_m$ is the minimal period of $\theta_m$.
If $0 \le r \le B - 2 - P_m$, set $u = r + 1$. When $\nu_2(u) < \ell_m$, adding $P_m$ does not change the $2$-adic valuation, so
\[
  \nu_2(u + P_m) = \nu_2(u).
\]
When $\nu_2(u) \ge \ell_m$, the two valuations $\nu_2(u)$ and $\nu_2(u + P_m)$ belong to $[\ell_m, h - 1]$, and $\gamma_m$ is constant there. In both cases,
\[
  \theta_m(r + P_m) = \theta_m(r).
\]
The pattern $\theta_m$ is therefore $P_m$-periodic.

Finally, since $0 \le \ell_m < h$, the number $P_m = 2^{\ell_m}$ divides $B = 2^h$ and satisfies $P_m < B$. This includes the case $m = 1$, for which $\ell_1 = 0$ and $P_1 = 1$.

Assume $\ell_m > 0$. Since $\ell_m \in R_m$, we have
\[
  \gamma_m(\ell_m) = \gamma_m(\ell_m - 1) \oplus 1.
\]
Now $\nu_2(2^{\ell_m}) = \ell_m$ and $\nu_2(2^{\ell_m - 1}) = \ell_m - 1$, whence
\[
  \theta_m(P_m - 1) \ne \theta_m(P_m/2 - 1).
\]
Thus no period of $\theta_m$ can divide $P_m / 2$. To exclude a proper period $d < P_m$ that does not divide $P_m$, we invoke the Fine-Wilf theorem: if $d$ and $P_m$ are both periods of the finite word $\theta_m$ of length $B - 1$, and if $B - 1 \ge P_m + d - \gcd(P_m, d)$, then $\gcd(P_m, d)$ is also a period. Now $B - 1 \ge 2 P_m - 1 \ge P_m + d - \gcd(P_m, d)$ since $P_m < B$ and $d < P_m$. Hence $\gcd(P_m, d)$ would be a period dividing $P_m / 2$, contradicting the previous step. The minimal period is therefore indeed $P_m$.
\end{proof}

The morphic structure of the derivative when $m$ is not a power of two follows from the evaluation at the transition indices between blocks.

\begin{lemma}
\label{lem:transition-morphic}
Let $m \ge 1$ not a power of two, with $k = \lceil \log_2 m \rceil$, $h = 2^k$ and $B = 2^h$. The derivative sequence $\Delta_m$ is the fixed point starting with $1$ of the $B$-uniform morphism $\Theta_m$ defined on $\{0,1\}$ by
\[
  \Theta_m(c) = \theta_m \cdot (\eta_m \oplus c),
\]
with $\eta_m = \gamma_m(h-1) = 0$. The morphism therefore reduces to $\Theta_m(c) = \theta_m \cdot c$, and $\Theta_m(0)$ as well as $\Theta_m(1)$ start with the internal pattern $\theta_m$.
\end{lemma}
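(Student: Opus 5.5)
The plan has two parts. First, Lemma~\ref{lem:block-form-derivative} already gives the block form of $\Delta_m$:
\[
  \Delta_m(qB+r)=\theta_m(r)\ (0\le r\le B-2),\qquad \Delta_m(qB+B-1)=\eta_m\oplus\Delta_m(q).
\]
This says exactly that $\Delta_m=\Theta_m(\Delta_m)$ with $\Theta_m(c)=\theta_m\cdot(\eta_m\oplus c)$. Second, the fixed point is pinned down: $\Delta_m(0)=\gamma_m(0)=1$ because $0\,\&\,M=0$. Moreover $\theta_m(0)=\gamma_m(\nu_2(1))=\gamma_m(0)=1$, so both images begin with $1$. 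Hence $\Theta_m$ is prolongable on $1$, and its unique fixed point beginning with $1$ is $\Delta_m$. What remains is to prove $\eta_m=\gamma_m(h-1)=0$ when $m$ is not a power of two.

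By definition, $\eta_m=\#R_m \bmod 2$ with $R_m=\{0\le s<h : s\,\&\,M=0\}$, which is also $\gamma_m(h-1)$. Since $h=2^k$ and $M<h$, the set $R_m$ consists of the submasks of the complement $\overline{M}=(h-1)\oplus M$ inside $k$ bits. Its cardinality is therefore $2^{k-\mathrm{pop}(M)}$, where $\mathrm{pop}$ counts the binary ones.

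This count is even unless $\mathrm{pop}(M)=k$, i.e.\ $M=2^k-1$ and $m=2^k$. Here I must check the edge cases of the definition $k=\lceil\log_2 m\rceil$. If $m$ is not a power of two, then $2^{k-1}<m<2^k$, so $M=m-1$ lies in $[2^{k-1},2^k-2]$ and has at most $k-1$ ones (and $k\ge2$). Hence $\#R_m$ is a positive power of two, so it is even, and $\eta_m=0$.

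Finally, the reduction $\Theta_m(c)=\theta_m\cdot c$ is immediate. Both images then start with $\theta_m$ since $B-1\ge1$.

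The only delicate step is the counting of $R_m$ and excluding $M=2^k-1$. Everything else is a direct rereading of Lemma~\ref{lem:block-form-derivative}.
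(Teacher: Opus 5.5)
Your proof is correct and follows the paper's argument. The block form gives $\Delta_m=\Theta_m(\Delta_m)$; you cite it from Lemma~\ref{lem:block-form-derivative}, while the paper re-derives it from Lemma~\ref{lem:derivative-formula}. Then $\eta_m=0$ because $\#R_m$ is a positive power of two when $M\neq 2^k-1$. Your explicit check that $\Theta_m$ is prolongable on $1$, so that $\Delta_m$ is the unique fixed point starting with $1$, is a small point the paper leaves implicit.
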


\begin{proof}
We use Lemma~\ref{lem:derivative-formula}. For $0 \le r \le B - 2$, the letter at internal position of the block is
\[
  \Delta_m(qB + r) = \gamma_m(\nu_2(r + 1)) = \theta_m(r).
\]

For the last letter of the block, set
\[
  t = \nu_2(q + 1).
\]
Then
\[
  \nu_2(qB + B) = \nu_2(B(q + 1)) = h + t.
\]
Since $M < h$, the bit at position $k$ of $M$ is zero, while $h = 2^k$ has only one nonzero bit at position $k$. Thus, for $0 \le s \le t$,
\[
  (h + s) \,\&\, M = 0 \quad\Longleftrightarrow\quad s \,\&\, M = 0.
\]
We thus obtain
\[
  \gamma_m(h + t) = \gamma_m(h - 1) \oplus \gamma_m(t).
\]

Suppose now that $m$ is not a power of two. Then $M \ne h - 1$. Among the $k$ low-order bits, at least one bit of $M$ is zero. The number of integers $s \in [0, h - 1]$ such that $s \,\&\, M = 0$ is therefore a positive power of $2$, hence an even integer. Thus
\[
  \eta_m = \gamma_m(h - 1) = 0.
\]
It follows that
\[
  \Delta_m(qB + B - 1) = \gamma_m(t) = \Delta_m(q),
\]
which gives $\Delta_m = \Theta_m(\Delta_m)$ with $\Theta_m(c) = \theta_m c$.
\end{proof}

Synchronization relies on the following property of translations on $\mathbb{Z} / B \mathbb{Z}$.

\begin{lemma}
\label{lem:translation-perforee}
Let $m \ge 1$ with $m$ not a power of two, and take again the notations $h = 2^k$, $B = 2^h$, $P_m = 2^{\ell_m}$. For every $\delta \in \{1, \dots, B - 1\}$, if a function $f \colon \{0, \dots, B - 2\} \to \{0, 1\}$ satisfies
\begin{itemize}
  \item[\rm(i)] for every $x \in \{0, \dots, B - 2\}$ such that $x + \delta \not\equiv B - 1 \pmod B$, we have $f(x) = f((x + \delta) \bmod B)$,
  \item[\rm(ii)] $f$ is $P_m$-periodic,
\end{itemize}
then
\[
  f(\delta - 1) = f(P_m - 1).
\]
\end{lemma}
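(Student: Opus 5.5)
The plan is to produce a single chain of equalities linking $f(P_m-1)$ to $f(\delta-1)$. Condition (i) supplies one translation step by $\delta$, and condition (ii) moves us freely inside a residue class modulo $P_m$. Throughout write $P=P_m$. Since $P$ divides $B$, reduction modulo $B$ does not change residues modulo $P$.

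First I make (ii) usable as a statement about residue classes. If $a,b\in\{0,\dots,B-2\}$ and $a\equiv b \pmod P$, then $f(a)=f(b)$. To see this, go from $\min(a,b)$ to $\max(a,b)$ in steps of $P$. Every intermediate point lies between $a$ and $b$, hence in the domain, so $P$-periodicity applies at each step.

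Next I choose a point $x\in\{0,\dots,B-2\}$ with $x\equiv P-1 \pmod P$ and $x\not\equiv B-1-\delta \pmod B$. The candidates are $x_j=jP-1$ for $j=1,\dots,B/P-1$; the value $j=B/P$ gives $B-1$, which is outside the domain. These candidates are pairwise distinct and all lie in $\{0,\dots,B-2\}$, so at most one of them is excluded by the second condition. It therefore suffices to have at least two candidates, that is, $B/P\ge 3$. This is where the hypothesis that $m$ is not a power of two enters, and it is the one step that needs care. Since $M<h$, the largest $s<h$ with $s\,\&\,M=0$ is the $k$-bit complement of $M$, so $\ell_m=h-1-M$. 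Because $m$ is not a power of two we have $m\ge3$, so $M\ge 2$ and $\ell_m\le h-3$. In particular $\ell_m\le h-2$, which gives $B/P=2^{h-\ell_m}\ge 4$ and leaves at least three candidates.

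With such an $x$, hypothesis (i) gives $f(x)=f(y)$, where $y=(x+\delta)\bmod B$. Here $y\ne B-1$ by the choice of $x$, so $y\in\{0,\dots,B-2\}$. Moreover $y\equiv x+\delta\equiv \delta-1 \pmod P$, using $P\mid B$. Now $\delta-1\in\{0,\dots,B-2\}$ because $1\le\delta\le B-1$. The residue-class statement then gives $f(y)=f(\delta-1)$, and likewise $f(x)=f(P-1)$ since $x\equiv P-1\pmod P$. Chaining these equalities yields $f(\delta-1)=f(y)=f(x)=f(P-1)$.

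I expect no real obstacle. The only delicate point is the counting in the choice of $x$: one must check that a valid $x$ survives both the domain restriction and the excluded residue $B-1-\delta$, which is exactly where the bound $\ell_m\le h-2$ is needed.
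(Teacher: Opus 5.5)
Your proof is correct, and it is genuinely shorter than the paper's.

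The paper argues globally. It splits the translation $x\mapsto x+\delta$ on $\mathbb{Z}/B\mathbb{Z}$ into $\gcd(\delta,B)$ cycles and deletes the vertex $B-1$. It then propagates hypothesis (i) along the punctured cycle to show that $f$ is constant on residue classes modulo $\gcd(\delta,B)$. Together with (ii), this gives periodicity modulo $g=\gcd(\delta,P_m)$, and the conclusion follows from $\delta-1\equiv P_m-1\pmod g$. The case $\delta=B/2$ is handled separately.

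You instead invoke (i) exactly once, at a point $x\equiv -1\pmod{P_m}$ chosen to avoid the single forbidden residue $B-1-\delta$. Then (ii), read as constancy on residue classes modulo $P_m$ inside the domain, carries the two ends of that single step to $P_m-1$ and $\delta-1$.

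The cost is the quantitative input $B/P_m\ge 3$. You derive it correctly from $\ell_m=h-1-M=2^k-m$, whereas the paper only uses $P_m<B$. This input is not essential to your method, though. When $B/P_m=2$, the lone candidate $P_m-1$ is excluded only if $\delta=B/2=P_m$, and then the conclusion is trivial.

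What your route buys is that it removes the cycle structure altogether. It also shows that, in the proof of Theorem~\ref{thm:dnc-general}, a single comparison of internal letters at a well-chosen offset would already suffice to force $\theta_m(\delta-1)=\theta_m(P_m-1)$.
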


\begin{proof}
The translation $x \mapsto x + \delta \pmod B$ on $\mathbb{Z} / B \mathbb{Z}$ decomposes into $g' = \gcd(\delta, B)$ cycles, each of length $B / g'$.

If $B / g' \ge 3$, removing the vertex $B - 1$ transforms the cycle containing it into a path connecting the other vertices of that cycle. Hypothesis {\rm(i)} applies to every edge of this path, so the constancy of $f$ propagates from vertex to vertex along the path. The other cycles remain unchanged, and Hypothesis {\rm(i)} likewise forces $f$ to be constant on each of them. Therefore $f$ is constant on each class modulo $g'$, restricted to $\{0, \dots, B - 2\}$. Combined with {\rm(ii)}, since $g'$ and $P_m$ are powers of $2$, setting $g = \gcd(g', P_m)$, we obtain that $f$ is $g$-periodic on $\{0, \dots, B - 2\}$. Now $g \mid \delta$ and $g \mid P_m$. The indices $\delta - 1$ and $P_m - 1$ are therefore congruent modulo $g$ and belong to $\{0, \dots, B - 2\}$, whence
\[
  f(\delta - 1) = f(P_m - 1).
\]

If $B / g' = 2$, then $\delta = B / 2$. Since $P_m$ strictly divides $B$, $P_m \le B / 2$ and so $P_m \mid \delta$. The $P_m$-periodicity {\rm(ii)} directly gives $f(\delta - 1) = f(P_m - 1)$.

The case $B / g' = 1$ would give $\delta \equiv 0 \pmod B$, which is excluded by $\delta \in \{1, \dots, B - 1\}$.
\end{proof}

These four lemmas give the synchronization in the general case.

\begin{theorem}
\label{thm:dnc-general}
For every $m \ge 1$, the derivative $\Delta_m$ synchronizes at delay
$2B - 1$, with $B = 2^{2^{\lceil \log_2 m \rceil}}$.
\end{theorem}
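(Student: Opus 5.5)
For $m$ a power of two the statement is Theorem~\ref{thm:dnc-primary}, so I assume $m$ is not a power of two. Then Lemma~\ref{lem:transition-morphic} gives $\eta_m=0$ and $\Delta_m=\Theta_m(\Delta_m)$ with $\Theta_m(c)=\theta_m c$. Each block $\Delta_m(qB)\cdots\Delta_m(qB+B-1)$ is therefore the fixed word $\theta_m$ followed by the free letter $\Delta_m(q)$. By Lemma~\ref{lem:theta-periodicity}, $\theta_m$ is $P_m$-periodic with $P_m<B$.

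I argue by contradiction. Suppose a factor $w$ of length $2B-1$ occurs at two positions whose phases differ by $\delta\in\{1,\dots,B-1\}$ modulo $B$. Longer factors reduce to this length by taking prefixes. Every window of length $2B-1$ contains at least one complete block, up to a shift. Write $f$ for the internal pattern $\theta_m$ on $\{0,\dots,B-2\}$.

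Comparing the two occurrences letter by letter gives, for every internal position $x$ of the first occurrence whose partner $x+\delta$ is also internal in the second occurrence, the equality $f(x)=f((x+\delta)\bmod B)$. Window length $2B-1$ is enough to cover each residue $x\in\{0,\dots,B-2\}$ in at least one aligned pair. This is the bookkeeping step I expect to be the main obstacle: in one occurrence the whole residue class $x$ must sit inside a full block, while the partner position need not be internal, and the excluded partners are exactly those with $x+\delta\equiv B-1\pmod B$. If that holds, $f$ satisfies hypothesis (i) of Lemma~\ref{lem:translation-perforee}. Hypothesis (ii) is Lemma~\ref{lem:theta-periodicity}. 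Hence $f(\delta-1)=f(P_m-1)$.

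To reach a contradiction I use the block-end letter. The position $B-1-\delta$ of the first occurrence is matched with the block-end position $B-1$ of the second, or vice versa. I would then exploit the value $\theta_m(P_m-1)=\gamma_m(\ell_m)$ and the fact that, by the argument in the proof of Lemma~\ref{lem:theta-periodicity}, $\gamma_m(\ell_m)\ne\gamma_m(\ell_m-1)$. Consider the block-end letters, which equal $\Delta_m(q)$ for consecutive $q$. These cannot all agree with the forced value: by Lemma~\ref{lem:no-00} and the fact that $\Delta_m(2n)=1$, consecutive block-end letters are $1$ at every other block. The window contains two block ends, and they would be forced to equal the periodic internal values at residues $B-1-\delta$ and $P_m-1$ simultaneously. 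If $\ell_m=0$, then $\theta_m$ is constant, and I would handle this separately: $\theta_m=1^{B-1}$, and zeros occur only at block ends, which reproduces the argument of Lemma~\ref{lem:zero-geometry}, provided $\Delta_m$ has a zero at distance at most $B$.

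If this matching does not close cleanly, my fallback is to treat the subcases $\delta\equiv0$ and $\delta\not\equiv0\pmod{P_m}$ separately. When $P_m\nmid\delta$, the minimality of the period in Lemma~\ref{lem:theta-periodicity}, combined with Fine–Wilf, contradicts equality of the internal parts directly. When $P_m\mid\delta$, the internal letters agree automatically, and the contradiction must come from the block-end letters, using the fact that $\Delta_m$ takes both values $0$ and $1$ infinitely often.
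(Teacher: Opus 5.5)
Your first half is the paper's argument. You reduce to a factor of length $2B-1$ occurring at two phases, verify hypothesis (i) of Lemma~\ref{lem:translation-perforee}, and conclude $\theta_m(\delta-1)=\theta_m(P_m-1)$. The bookkeeping you flag is routine: for each $x\le B-2$ the window has a position where the first occurrence has phase $x$, hence is internal, and if $x+\delta\not\equiv B-1$ the second occurrence is internal there too.

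The gap is the closing step, which you leave open and which needs two facts you never establish.
First, take an occurrence of \emph{nonzero} phase $b$ and write the other phase as $b+\delta \bmod B$; at phase $0$ the window contains only one block end. At phase $b\neq 0$ the window contains exactly two block ends, at offsets $B-1-b$ and $2B-1-b$, carrying $\Delta_m(q)$ and $\Delta_m(q+1)$. Being $B$ apart, both fall at the \emph{same} internal residue $\delta-1$ of the other occurrence, so $\Delta_m(q)=\Delta_m(q+1)=\theta_m(\delta-1)$.
Second, you need the \emph{value} $\theta_m(P_m-1)=\gamma_m(\ell_m)=0$. This holds because $\gamma_m$ is constant on $[\ell_m,h-1]$ and $\gamma_m(h-1)=\eta_m=0$ for $m$ not a power of two (Lemma~\ref{lem:transition-morphic}).
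Then $\Delta_m(q)\Delta_m(q+1)=00$, contradicting Lemma~\ref{lem:no-00}.

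What you propose instead does not produce a contradiction. The relation $\gamma_m(\ell_m)\neq\gamma_m(\ell_m-1)$ says nothing about the value of $\gamma_m(\ell_m)$. Forcing block ends to the values at residues $B-1-\delta$ and $P_m-1$ ``simultaneously'' gains nothing: hypothesis (i) is symmetric under $\delta\mapsto B-\delta$, so Lemma~\ref{lem:translation-perforee} applied to $B-\delta$ already gives $\theta_m(B-1-\delta)=\theta_m(P_m-1)$. The decisive identity is $\theta_m(\delta-1)=\gamma_m(\ell_m)=\gamma_m(h-1)=\eta_m$, and nothing in your list supplies it.

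Your separate case $\ell_m=0$ is vacuous, since $\ell_m=0$ exactly when $M=h-1$, i.e.\ when $m$ is a power of two.

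In the fallback, the branch $P_m\nmid\delta$ is salvageable. Hypothesis (i) makes both $\delta$ and $B-\delta$ periods of the finite word $\theta_m$, and Fine--Wilf with $\min(\delta,B-\delta)\le B/2$ contradicts the minimality of $P_m$. But the branch $P_m\mid\delta$ is where the content lies, and ``$\Delta_m$ takes both values infinitely often'' does not settle it. There you again need both facts above, here in the form $\theta_m(\delta-1)=\gamma_m(\nu_2(\delta))=0$, which holds because $\ell_m\le\nu_2(\delta)\le h-1$.
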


\begin{proof}
The synchronization at delay $2B - 1$ for $m = 2^k$ is established in Lemma~\ref{lem:zero-geometry}.

Suppose now that $m$ is not a power of two. We use the morphism
\[
  \Theta_m(c) = \theta_m c,
\]
from Lemma~\ref{lem:transition-morphic}. We show synchronization at delay $2B - 1$. It is enough to exclude two occurrences of the same factor of length exactly $2B - 1$ with distinct starting phases modulo $B$. Indeed, any longer common factor with distinct starting phases has a prefix of length $2B - 1$ with the same property.

Suppose by contradiction that the same factor $W$ of length $2B - 1$ appears with two distinct phases modulo $B$. We choose one of the two occurrences with a nonzero phase, which is possible since the phases are distinct. Denote this phase $b$, with
\[
  1 \le b \le B - 1.
\]
The other phase is then $b + \delta$ modulo $B$, with
\[
  1 \le \delta \le B - 1.
\]

In the first occurrence, the two transition positions contained in the window are
\[
  j_0 = B - 1 - b \qquad\text{and}\qquad j_0 + B.
\]
They correspond to the last letters of two consecutive blocks, so they carry
\[
  \Delta_m(q) \qquad\text{and}\qquad \Delta_m(q + 1),
\]
for some $q$.

In the second occurrence, these same two positions have internal offset
\[
  \delta - 1,
\]
modulo $B$. Since $0 \le \delta - 1 \le B - 2$, this offset is internal to the block. The value there is therefore imposed by $\theta_m$. We obtain
\[
  \Delta_m(q) = \theta_m(\delta - 1), \qquad \Delta_m(q + 1) = \theta_m(\delta - 1).
\]
We show that
\[
  \theta_m(\delta - 1) = 0.
\]

We compare the two occurrences at internal offsets. For every $x \in \{0, \dots, B - 2\}$, choose $j_x \in \{0, \dots, 2B - 2\}$ such that $b + j_x \equiv x \pmod B$. Such a $j_x$ exists because an interval of length $2B - 1$ meets every residue class modulo $B$ at least once. In particular, the residue $b - 1$ is met at $j_x = B - 1$. At this position the first occurrence is internal. If moreover $x + \delta \not\equiv B - 1 \pmod B$, then the second occurrence is internal as well at the same position $j_x$, and the equality of the two factors at $j_x$ gives
\[
  \theta_m(x) = \theta_m((x + \delta) \bmod B).
\]
The function $\theta_m$ is $P_m$-periodic (Lemma~\ref{lem:theta-periodicity}). Lemma~\ref{lem:translation-perforee} applied to $f = \theta_m$ therefore gives
\[
  \theta_m(\delta - 1) = \theta_m(P_m - 1).
\]
Since $P_m = 2^{\ell_m}$, we have
\[
  \theta_m(P_m - 1) = \gamma_m(\nu_2(P_m)) = \gamma_m(\ell_m).
\]
For $m$ not a power of two, we saw in the proof of Lemma~\ref{lem:transition-morphic} that
\[
  \gamma_m(h - 1) = 0.
\]
Since $\gamma_m$ is constant on $[\ell_m, h - 1]$, it follows that
\[
  \gamma_m(\ell_m) = 0.
\]
Hence
\[
  \theta_m(\delta - 1) = 0.
\]

The transition letters between the two phase-distinct occurrences are precisely the letters of $\Delta_m$ at consecutive positions $q$ and $q+1$. The relation $\theta_m(\delta - 1) = 0$ forces both transition letters to be $0$, so
\[
  \Delta_m(q) = \Delta_m(q + 1) = 0.
\]
The factor $00$ thus appears in $\Delta_m$, contradicting Lemma~\ref{lem:no-00}. This proves the synchronization at delay $2B-1$.
\end{proof}

\begin{proof}[Proof of Theorem~\ref{thm:complexity-recurrence}]
By Theorem~\ref{thm:dnc-general}, the derivative $\Delta_m$ synchronizes
at delay $2B-1$. Proposition~\ref{prop:dnc-conditional} then gives the
recurrences \eqref{eq:univ-r0} and \eqref{eq:univ-rne0} for every
$n\ge2$.
\end{proof}

\subsection{Initial values}
\label{subsec:initial-values}

The recurrence of Theorem~\ref{thm:complexity-recurrence} reduces the
computation of \(p_m\) to the finite seed
\[
  p_m(1),p_m(2),\ldots,p_m(2B-1).
\]
For a fixed \(m\), this finite list can be computed directly from the
binary digit formula of Theorem~\ref{thm:masque-binaire}, or from the
uniform morphism of Section~\ref{sec:morphic}. In the primary case
\(m=2^k\), the seed has the following closed form.

\begin{proposition}
\label{prop:delta-initial-complexity}
Let $m = 2^k$ with $k \ge 0$, and let $B = 2^{2^k}$. The complexity $q_m(L)$ satisfies $q_m(L) = L + 1$ for $1 \le L \le B$, and $q_m(L) = 2L - B + 1$ for $B < L \le 2B - 1$.
\end{proposition}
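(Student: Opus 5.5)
The plan is to describe the factors of $\Delta_m$ of length $L \le 2B-1$ explicitly, using the structure given by Lemmas~\ref{lem:derived-morphism-primary} and \ref{lem:zero-geometry}. In the primary case $\Delta_m$ is a concatenation of runs of $1$'s separated by isolated zeros. All zeros sit at positions $\equiv B-1 \pmod B$, and consecutive zeros are at distance $B$ or $2B$. Equivalently, $\Delta_m$ is a word over the two "return blocks" $1^{B-1}0$ and $1^{2B-1}0$. Both occur, and in fact infinitely often: the ancestor $\Delta_m$ contains both factors $11$ and $101$, since $\Theta_m(1)=1^{B-1}0$ and $\Theta_m(0)=1^B$. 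We also need the factors $0\,1^{B-1}\,0$, $0\,1^{2B-1}\,0$ and the concatenations of the two return words in both orders. For these, note that the ancestor contains $1101$ and $1011$ when $B\ge 4$ (the case $k=0$, Thue--Morse, can be checked directly). Hence the return word sequence contains both adjacencies (short, long) and (long, short).

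Counting then proceeds by the number of zeros in the factor. A factor of length $L$ contains at most one zero when $L \le B$, and at most two when $L \le 2B-1$. Three zeros would require length at least $2B+1$, since consecutive zeros are at distance at least $B$.

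For $1 \le L \le B$, the factors are $1^L$ together with the words $1^{i}01^{L-1-i}$ for $0 \le i \le L-1$. Each of the latter occurs because the zero is surrounded by at least $B-1$ ones on each side. This gives $q_m(L)=L+1$.

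For $B < L \le 2B-1$, we enumerate three classes.
\begin{itemize}
\item $1^L$, which occurs inside a $1^{2B-1}$ run.
\item Factors with exactly one zero, $1^i 0 1^{L-1-i}$. These are admissible iff no second zero is forced. Since one of the two neighbouring gaps can be long (length $2B-1$), every $i \in [0,L-1]$ occurs, giving $L$ factors. We must check this with care: we need $i \le 2B-1$ and $L-1-i \le 2B-1$, which is automatic, and one occurrence with the appropriate long side.
\item Factors with two zeros. These must be at distance exactly $B$ (distance $2B$ does not fit), so they have the form $1^{i}\,0\,1^{B-1}\,0\,1^{L-B-1-i}$ with $0 \le i \le L-B-1$. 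That is $L-B$ factors, all realized thanks to the adjacencies above.
\end{itemize}
The total is $1+L+(L-B)=2L-B+1$.

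The main obstacle is the existence part: showing that every candidate pattern really occurs. In the two-zero case this needs the flanking runs to be long enough on both sides. With $L \le 2B-1$ the flanks have total length $L-B-1 \le B-2$, so the short runs of length $B-1$ on either side already suffice, and the occurrence of $1^{B-1}01^{B-1}01^{B-1}$ (from $111$ in the ancestor, present since $\Theta_m(1)$ begins with $1^{B-1}$) settles it. We then double-check the boundary $L=B$, where both formulas give $B+1$, as a consistency test.
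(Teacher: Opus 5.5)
Your proposal is correct and follows essentially the same route as the paper. Both use the zero geometry of Lemma~\ref{lem:zero-geometry}, get both gap lengths $B$ and $2B$ from the ancestor factors $11$ and $101$, and count factors by their number of zeros to obtain $L+1$ and $1+L+(L-B)$. The extra facts you invoke (the ancestor factors $1101$, $1011$, $111$, and adjacency of the two return words in both orders) are true for $B\ge 4$ but not needed: every zero already has at least $B-1$ ones on each side, and for $L\le 2B-1$ a one-zero word needs a long gap on at most one side.
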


\begin{proof}
By Lemma~\ref{lem:zero-geometry}, the zeros are isolated and two consecutive zeros are separated by a distance $B$ or $2B$. Moreover, both distances occur. The factor $11$ appears in the fixed point of $\Theta_m$. If $B > 2$, it appears inside $\Theta_m(1) = 1^{B-1} 0$. If $B = 2$, then $\Theta_m(0) = 11$ produces the factor $11$, and the fixed point contains the letter $0$ since $\Theta_m(1)$ ends in $0$. The inflation of a pair $11$ in the ancestor gives two zeros separated by $B$. The factor $101$ appears around any zero of the fixed point, since zeros are isolated and lie at the end of $\Theta_m(1)$. Its inflation gives two zeros separated by $2B$.

Let first $1 \le L \le B$. A factor of length $L$ contains at most one zero. The only possible words are therefore
\[
  1^L,
\]
and
\[
  1^i 0 1^{L-1-i} \qquad (0 \le i \le L - 1).
\]
They all appear by sliding a window of length $L$ around a zero. Since the neighboring zeros are always at distance at least $B$, there are at least $B - 1$ letters $1$ on each side, which suffices since $L \le B$. We thus obtain
\[
  q_m(L) = L + 1.
\]

Suppose now
\[
  B < L \le 2B - 1.
\]
A factor of length $L$ contains at most two zeros.

There is a single factor with no zero, namely $1^L$. It exists because a distance $2B$ between two zeros provides a block of $2B - 1$ letters $1$.

The factors with a single zero are the words
\[
  1^i 0 1^{L-1-i} \qquad (0 \le i \le L - 1).
\]
They all appear. If $i \le B - 1$ and $L - 1 - i \le B - 1$, any zero suffices. If $L - 1 - i > B - 1$, we take a zero followed by a gap $2B$. If $i > B - 1$, we take a zero preceded by a gap $2B$. These three cases cover all indices $i$, since $L \le 2B - 1$.

The factors with two zeros must have their two zeros separated by $B$, since the length is at most $2B - 1$. They are therefore of the form
\[
  1^i 0 1^{B-1} 0 1^{L-B-1-i} \qquad (0 \le i \le L - B - 1).
\]
These words all appear from a pair of zeros separated by $B$. The outer sides require at most $B - 2$ letters $1$, and the neighboring zeros are at distance at least $B$.

We therefore have
\[
  1 + L + (L - B) = 2L - B + 1,
\]
distinct factors, which gives
\[
  q_m(L) = 2L - B + 1.
\]
\end{proof}

The closed-form formula for $p_m$ follows.

\begin{corollary}
\label{cor:primary-closed-complexity}
For $m=2^k$, $k\ge 0$ and $B=2^{2^k}$, we have $p_m(n)=2n$ for $1\le n\le B+1$. For every integer $i\ge 0$, the complexity is determined by
\[
  p_m(n)=
  \begin{cases}
    -2(B-1)B^i+4(n-1),
       & \text{if } B^{i+1}+1<n\le (2B-1)B^i+1,\\[2mm]
    2B^{i+1}+2(n-1),
       & \text{if } (2B-1)B^i+1<n\le B^{i+2}+1.
  \end{cases}
\]
\end{corollary}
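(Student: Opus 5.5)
The plan is to derive the closed form from Lemma~\ref{lem:complexity-derivative-general}, which says $p_m(n)=2q_m(n-1)$, combined with Proposition~\ref{prop:delta-initial-complexity} for small lengths and Lemma~\ref{lem:desubstitution-generale} for large ones. The latter applies because synchronization holds by Theorem~\ref{thm:dnc-primary}. It is convenient to work with $L=n-1$. The two branches of the claimed formula then read $q_m(L)=2L-(B-1)B^i$ for $B^{i+1}<L\le(2B-1)B^i$, and $q_m(L)=B^{i+1}+L$ for $(2B-1)B^i<L\le B^{i+2}$. The initial range $p_m(n)=2n$ for $1\le n\le B+1$ is exactly $q_m(L)=L+1$ for $0\le L\le B$, which is the first half of Proposition~\ref{prop:delta-initial-complexity} together with the convention $q_m(0)=1$.

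I will prove the two-branch formula for $q_m$ by induction on $i$. For the base case $i=0$, the first branch is $B<L\le 2B-1$ with $q_m(L)=2L-B+1$, which is again Proposition~\ref{prop:delta-initial-complexity}. The second branch is $2B-1<L\le B^2$ with $q_m(L)=B+L$. Here I apply Lemma~\ref{lem:desubstitution-generale} with $L=aB+r$, where $2\le a\le B$ (the case $a=B$ forces $r=0$). The arguments $a$ and $a+1$ are both at most $B+1$. If $a\le B-1$, both values follow the linear law $q_m(x)=x+1$, giving $(B-r)(a+1)+r(a+2)=aB+r+B$. If $a=B$ and $r=0$, the value is $B\,q_m(B)=B(B+1)$, as needed. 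The boundary value $q_m(B+1)=B+3$ from the second regime is needed only when $a=B$ and $r>0$, which is out of range.

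For the inductive step, assume the formula holds for all $L\le B^{i+1}$, and take $L$ with $B^{i+1}<L\le B^{i+2}$, writing $L=aB+r$. Then $a$ lies in $[B^i,B^{i+1}]$, so $a$ and $a+1$ belong to the level-$(i-1)$ regime, where $q_m$ is piecewise linear. The slope is $2$ on $(B^i,(2B-1)B^{i-1}]$ and $1$ on $((2B-1)B^{i-1},B^{i+1}]$. Lemma~\ref{lem:desubstitution-generale} gives $q_m(L)=B\,q_m(a)+r\,(q_m(a+1)-q_m(a))$. The increment $q_m(a+1)-q_m(a)$ is $2$ or $1$ depending on which linear piece contains $a$. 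Multiplying the breakpoints by $B$ sends $B^i$ to $B^{i+1}$ and $(2B-1)B^{i-1}$ to $(2B-1)B^i$, which reproduces the new breakpoints. A direct computation, for instance $B(2a-(B-1)B^{i-1})+2r=2L-(B-1)B^i$, checks each branch.

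The main obstacle is the bookkeeping at the breakpoints. When $a$ is exactly at a breakpoint, $a+1$ lies in the next piece, and I must check that the slope used for $r>0$ is correct. I also need both linear expressions to agree at the breakpoints, i.e. continuity: at $L=(2B-1)B^i$ both branches give $(2B-1)B^i+B^{i+1}$. Continuity makes the endpoint cases consistent. Finally, I translate back through $p_m(n)=2q_m(n-1)$, which gives $-2(B-1)B^i+4(n-1)$ and $2B^{i+1}+2(n-1)$.
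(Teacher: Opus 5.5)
Your proposal is correct and takes essentially the same route as the paper. You pass to $q_m$ via $p_m(n)=2q_m(n-1)$, take the range $L\le 2B-1$ from Proposition~\ref{prop:delta-initial-complexity}, and induct on $i$ with the desubstitution formula; your continuity check at the breakpoints plays the role of the paper's formulas ``extended to the boundary values''. Your continuity argument also needs the inter-level breakpoint $B^{i}$, which arises when $a=B^i$: there $q_m(B^i)=B^i+B^{i-1}$, and both adjacent linear laws give this value, so the argument goes through.
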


\begin{proof}
Set
\[
  L = n - 1.
\]
By Lemma~\ref{lem:complexity-derivative-general}, we have
\[
  p_m(n) = 2 q_m(L).
\]
The formula
\[
  p_m(n) = 2n \qquad (1 \le n \le B + 1),
\]
comes from $q_m(L) = L + 1$ for $0 \le L \le B$, with the convention $q_m(0) = 1$.

For $q_m(L)$, the formulas are as follows. Proposition~\ref{prop:delta-initial-complexity} gives
\[
  q_m(L) = 2L - (B - 1) \qquad (B < L \le 2B - 1).
\]
This gives the first formula for $i = 0$.

For $2B - 1 < L \le B^2$, write $L = aB + r$, with $0 \le r < B$. Then $2 \le a \le B$, and if $a = B$ we necessarily have $r = 0$. Desubstitution gives
\[
  q_m(L) = (B - r) q_m(a) + r q_m(a + 1).
\]
Since $a \le B$, the initial values give
\[
  q_m(a) = a + 1, \qquad q_m(a + 1) = a + 2,
\]
when the second term occurs. Thus
\[
  q_m(L) = B(a + 1) + r = L + B.
\]
This gives the second formula for $i = 0$.

Suppose now the two formulas known up to rank $i - 1$. We prove those at rank $i$. The known formulas extend to the boundary values in the form
\[
  q_m(B^i) = B^i + B^{i-1},
\]
\[
  q_m(x) = -(B - 1) B^{i-1} + 2x \qquad (B^i \le x \le (2B - 1) B^{i-1}),
\]
and
\[
  q_m(x) = B^i + x \qquad ((2B - 1) B^{i-1} \le x \le B^{i+1}).
\]

Let first
\[
  B^{i+1} < L \le (2B - 1) B^i.
\]
Write $L = aB + r$, with $0 \le r < B$. The integers $a$ and $a + 1$, when the second occurs, belong to the first formula extended from rank $i - 1$. Thus
\[
  q_m(a) = -(B - 1) B^{i-1} + 2a,
\]
and
\[
  q_m(a + 1) = -(B - 1) B^{i-1} + 2a + 2.
\]
Desubstitution then gives
\[
\begin{aligned}
  q_m(L) &= (B - r) \bigl(-(B - 1) B^{i-1} + 2a\bigr) \\
         &\quad + r \bigl(-(B - 1) B^{i-1} + 2a + 2\bigr) \\
         &= -(B - 1) B^i + 2(aB + r) \\
         &= -(B - 1) B^i + 2L.
\end{aligned}
\]

Let next
\[
  (2B - 1) B^i < L \le B^{i+2}.
\]
Write again $L = aB + r$, with $0 \le r < B$. The integers $a$ and $a + 1$, when the second occurs, belong to the second formula extended from rank $i - 1$. Thus
\[
  q_m(a) = B^i + a,
\]
and
\[
  q_m(a + 1) = B^i + a + 1.
\]
Desubstitution gives
\[
\begin{aligned}
  q_m(L) &= (B - r)(B^i + a) + r(B^i + a + 1) \\
         &= B^{i+1} + aB + r \\
         &= B^{i+1} + L.
\end{aligned}
\]

We thus have, for every $i \ge 0$,
\[
  q_m(L) = -(B - 1) B^i + 2L \qquad (B^{i+1} < L \le (2B - 1) B^i),
\]
and
\[
  q_m(L) = B^{i+1} + L \qquad ((2B - 1) B^i < L \le B^{i+2}).
\]
Replacing $L$ by $n - 1$ and multiplying by $2$, we obtain the stated formula for $p_m(n)$.
\end{proof}

These formulas immediately specialize to the Layman sequence.

\begin{example}
\label{ex:complexity-a2}
For $m = 2$, we have $k = 1$ and $B = 4$. Therefore $p_2(n) = 2n$ for $1 \le n \le 5$, and for every $i \ge 0$, Corollary~\ref{cor:primary-closed-complexity} specializes to
\[
  p_2(n) =
  \begin{cases}
    -6 \cdot 4^i + 4(n-1),
       & \text{if } 4^{i+1}+1 < n \le 7 \cdot 4^i + 1,\\[2mm]
    8 \cdot 4^i + 2(n-1),
       & \text{if } 7 \cdot 4^i + 1 < n \le 16 \cdot 4^i + 1.
  \end{cases}
\]
The first values are
\[
  p_2(1), p_2(2), \ldots = 2, 4, 6, 8, 10, 14, 18, 22, 24, 26, 28, 30, \ldots.
\]
\end{example}

For a non-primary value of \(m\), the seed is still finite and directly
computable. The next example shows how the recurrence is used once this
seed is known.

\begin{example}
\label{ex:m3-complete}
For \(m=3\), we have
\[
  k=2,\qquad B=16.
\]
A direct computation from the binary digit formula gives the finite seed
\[
  p_3(1),p_3(2),\ldots,p_3(31):
\]
\[
\begin{array}{c|rrrrrrrr}
n        &1&2&3&4&5&6&7&8\\
\hline
p_3(n)   &2&4&6&10&12&14&16&18\\[2mm]
n        &9&10&11&12&13&14&15&16\\
\hline
p_3(n)   &20&22&24&26&28&30&32&34\\[2mm]
n        &17&18&19&20&21&22&23&24\\
\hline
p_3(n)   &36&40&44&48&52&56&60&64\\[2mm]
n        &25&26&27&28&29&30&31\\
\hline
p_3(n)   &68&72&76&80&84&88&92
\end{array}
\]
For all larger values, Theorem~\ref{thm:complexity-recurrence} applies.
If
\[
  N=16q+r,\qquad q\ge2,\qquad 0\le r<16,
\]
then
\[
  p_3(16q)=p_3(q)+15p_3(q+1),
\]
and, for \(1\le r\le15\),
\[
  p_3(16q+r)=(17-r)p_3(q+1)+(r-1)p_3(q+2).
\]
For example,
\[
  p_3(32)=p_3(2)+15p_3(3)=4+15\cdot6=94.
\]
Together with the seed above, the recurrence determines all values of
\(p_3\).
\end{example}

\section*{Perspectives}

Two extensions will be treated elsewhere. The first is a $q$-ary version of the transform, where addition modulo $q$ replaces binary complementation. For $q$ prime, the iterates are fixed points of balanced morphisms on a $q$-letter alphabet in the sense of \v{C}ern\'y \cite{Cerny2013, Cerny2017}. This gives $q$-ary Prouhet-Tarry-Escott partitions, composition formulas, and factor complexity formulas. The second extension concerns Woods-Robbins type products attached to the iterates $a_{2^k}$.

The transform $\mathcal{T}$ can also be applied to other binary words. For example, if $f$ is the Fibonacci word \seqnum{A003849}, fixed by $0 \mapsto 01$ and $1 \mapsto 0$, then
\[
  \mathcal{T}(f) = \mathrm{ftm},
\]
where $\mathrm{ftm}$ \seqnum{A095076} is the Fibonacci-Thue-Morse sequence, that is, the parity of the digit sum in the Zeckendorf representation. For this sequence, see Ferrand \cite{Ferrand2007} and Shallit \cite{ShallitFibo2022, ShallitFiboComplexity2021}. It is Fibonacci-automatic in the sense of Mousavi, Schaeffer, and Shallit \cite{MousaviSchaefferShallit2016}. We do not pursue this direction here.

\end{document}